\newtheorem{theorem}{Theorem}
\newtheorem{corollary}[theorem]{Corollary}
\newtheorem{example}[theorem]{Example}
\newtheorem{lemma}[theorem]{Lemma}
\newtheorem{definition}[theorem]{Definition}
\newtheorem{fact}[theorem]{Fact}
\newtheorem{proposition}[theorem]{Proposition}
\newtheorem{remark}[theorem]{Remark}
\newcommand\cb[1]{\mathbf{#1}} 
\newcommand\Latt{\mathcal{L}}
\newcommand\dsup{\sup\nolimits^{\scriptstyle\uparrow}}
\newcommand\dcup{\bigcup\nolimits^{\scriptstyle\uparrow}}
\newcommand\eqdef{\mathrel{\buildrel \text{def}\over=}}
\newcommand\nat{\mathbb{N}}
\newcommand\Z{\mathbb{Z}}
\newcommand\rat{\mathbb{Q}}
\newcommand\D{\mathbb{Q}_2}
\newcommand{\real}{\mathbb{R}}
\newcommand\Rp{\real_+}
\newcommand\creal{\overline{\real}_+}
\newcommand\diff{\smallsetminus}
\newcommand{\Pervcat}{\cb{Perv}}
\newcommand{\BPervcat}{\cb{BPerv}}
\newcommand{\Topcat}{\cb{Top}}
\newcommand{\omegaTopcat}{\omega\cb{Top}}
\newcommand{\Mes}{\cb{Mes}}
\newcommand{\Lcont}{\mathfrak L}
\newcommand\limp{\Rightarrow}
\newcommand\Borel[1]{\mathcal B ({#1})}
\numberwithin{equation}{section}
\title[Radon-Nikod\'ym]{A Radon-Nikod\'ym Theorem for Valuations}
\author{Jean Goubault-Larrecq\\
  Universit{\'e} Paris-Saclay, CNRS, ENS Paris-Saclay, Laboratoire
  M{\'e}thodes Formelles,\\
  4, avenue des sciences,
  91190, Gif-sur-Yvette, France.\\
  E-mail: \texttt{goubault@lsv.fr}}
\date{}
\keywords{Measure, valuation, Radon-Nikod\'ym derivative, density function}
\subjclass[2010]{Primary
  28C15; 
  Secondary
  60B05.   
}
\begin{document}


\baselineskip=17pt


\begin{abstract}
  We enquire under which conditions, given two $\sigma$-finite,
  $\omega$-continuous valuations $\nu$ and $\mu$, $\nu$ has density
  with respect to $\mu$.  The answer is that $\nu$ has to be
  absolutely continuous with respect to $\mu$, plus a certain Hahn
  decomposition property, which happens to be always true for
  measures.
\end{abstract}


\maketitle
\markboth{J. Goubault-Larrecq}{Radon-Nikod\'ym for valuations}

\noindent
\begin{minipage}{0.25\linewidth}
  \includegraphics[scale=0.2]{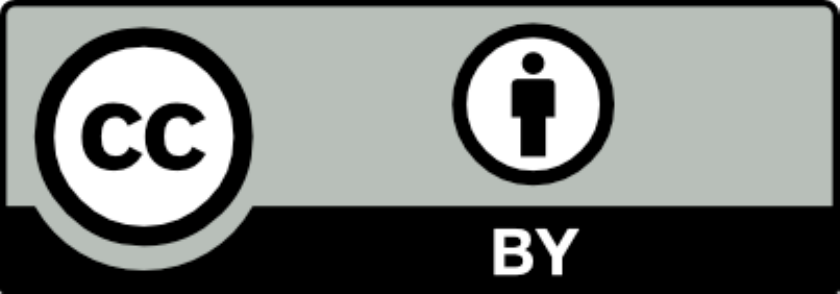}
\end{minipage}
\begin{minipage}{0.74\linewidth}
  \scriptsize
  For the purpose of Open Access, a CC-BY public copyright licence has
  been applied by the authors to the present document and will be
  applied to all subsequent versions up to the Author Accepted
  Manuscript arising from this submission.
\end{minipage}

\section{Introduction}

In its simplest form, the Radon-Nikod\'ym theorem
\cite{Radon:nikodym,Nikodym:radon} states that a $\sigma$-finite
measure $\nu$ has a measurable density with respect to a
$\sigma$-finite measure $\mu$ if and only if $\nu$ is absolutely
continuous with respect to $\mu$.  The purpose of this paper is to
investigate a similar question in the larger setting of
$\omega$-continuous valuations, a setting which encompasses both
measures and the continuous valuations used in the semantics of
probabilistic programming languages \cite{jones89,Jones:proba}.

Probably the distinguishing feature of valuations compared to measures
is that they give mass to sets forming a collection that is not
necessarily closed under complements: a lattice of subsets of
valuations, a topology for continuous valuations, and what we call an
$\omega$-topology for $\omega$-continuous valuations.

Sets equipped with such collection of sets are Pervin spaces,
topological spaces, and what we call $\omega$-topological spaces
respectively.  They form categories $\Pervcat$, $\Topcat$ and
$\omegaTopcat$ respectively.

As we will see, the question of the existence of density maps is more
about the category in which the density maps should reside, not so
much about the distinction between valuations and mesaures.  Indeed,
on sufficiently nice topological spaces, continuous valuations and
measures are essentially the same thing, and therefore
\emph{measurable} density maps will exist under the familiar
assumptions of the classical Radon-Nikod\'ym theorem.  This will also
entail that they do not exist in general as morphisms in $\Topcat$ or
$\omegaTopcat$, as we will see in Section~\ref{sec:absol-cont-not}.

Hence some additional assumptions are needed to ensure that density
maps exist in the relevant categories, and it is the purpose of this
paper to identify them.

\emph{Outline.}  We give brief preliminaries in
Section~\ref{sec:preliminaries}, and we develop the theory of
valuations, including $\omega$-continuous valuations, measures and
continuous valuations, in Section~\ref{sec:valuations-measures}.  We
develop necessary conditions for density maps to exist in
Section~\ref{sec:density-maps}, and we show that they are sufficient
in Section~\ref{sec:exist-dens-maps}.  Our final result includes the
classical Radon-Nikod\'ym theorem as a special case.


\section{Preliminaries}
\label{sec:preliminaries}

We assume some basic knowledge about topology \cite{JGL-topology} and
about measure theory \cite{Billingsley:probmes}.  We will need the
following from domain theory \cite{JGL-topology,GHKLMS:contlatt}.

A \emph{directed} family $D$ in a poset $P$ is a non-empty family such
that any two elements of $D$ have a common upper bound in $D$.  A
\emph{dcpo} (short for directed-complete partial order) is a poset in
which every directed family has a supremum.  We write $\dsup D$, or
$\dsup_{i \in I} x_i$ if $D = {(x_i)}_{i \in I}$, for directed
suprema.  We also write $\dcup$ for directed union.  An
\emph{$\omega$cpo} is defined similarly, except that we only require
the existence of suprema of \emph{monotone sequences}
${(x_n)}_{n \in \nat}$ (namely,
$x_0 \leq x_1 \leq \cdots \leq x_n \leq \cdots$) instead of directed
families.

A function $f \colon X \to Y$ between dcpos is \emph{Scott-continuous}
if and only it is monotonic (order-preserving) and preserves suprema
of directed sets, namely
$\dsup_{i \in I} f (x_i) = f (\dsup_{i \in I} x_i)$ for every directed
family ${(x_i)}_{i \in I}$.  It is \emph{$\omega$-continuous} if and
only if it is monotonic and preserves suprema of monotone sequences.

The \emph{Scott topology} on a dcpo has as open sets those subsets $U$
that are upwards-closed (if $x \in U$ and $x \leq y$ then $y \in U$)
and such that every directed family $D$ such that $\dsup D \in U$
intersects $U$.  The Scott-continuous maps are exactly the continuous
maps with respect to the Scott topologies.

\section{Valuations and measures}
\label{sec:valuations-measures}

As our general setting, we will consider pairs $(X, \Latt)$ where $X$
is a set and $\Latt$ is a lattice of subsets, namely a family of
subsets of $X$ that is closed under finite intersections and finite
unions.  In particular, the empty set and $X$ belong to $\Latt$.

We retrieve topological spaces by requiring that $\Latt$ be closed
under arbitrary unions; or just under directed unions.  Indeed,
the union of any family ${(U_i)}_{i \in I}$ of subsets of $X$ is equal
to the directed supremum $\dcup_{J \text{ finite }\subseteq I}
\bigcup_{j \in J} U_j$.

We will call \emph{$\omega$-topology} on $X$ any lattice of subsets
$\Latt$ that is at the same time an $\omega$cpo under inclusion.  Then
$(X, \Latt)$ is an \emph{$\omega$-topological space}.  It is
equivalent to require that $\Latt$ be closed under countable unions,
since the union of any countable family ${(U_n)}_{n \in \nat}$ of
elements of $\Latt$ is the union
$\dcup_{n \in \nat} \bigcup_{i=0}^n U_i$ of a chain of elements of
$\Latt$.


A lattice of subsets $\Latt$ that is closed under complements is a
\emph{algebra of subsets}, and an $\omega$-topology
$\Latt$ that is closed under complements is the same thing as a
$\sigma$-algebra.  Then $(X, \Latt)$ is called a \emph{measurable
  space}.

There are categories $\Pervcat$, $\BPervcat$, $\Topcat$,
$\omegaTopcat$ 
and $\Mes$ whose objects are pairs $(X, \Latt)$ where $\Latt$ is a
lattice of subsets, resp.\ an algebra of subsets, resp.\ a topology,
resp.\ an $\omega$-topology, 
resp.\ a $\sigma$-algebra.  In each case, the morphisms
$f \colon (X, \Latt) \to (Y, \Latt')$ are the maps $f \colon X \to Y$
such that $f^{-1} (V) \in \Latt$ for every $V \in \Latt'$.  They are
called \emph{continuous} maps on $\Topcat$, and \emph{measurable} maps
on $\Mes$.  The categories $\Pervcat$ and $\BPervcat$ are the
categories of \emph{Pervin spaces} and \emph{Boolean Pervin spaces}
respectively \cite[Section~3.1]{Pin:pervin}.  
Those
categories are all full subcategories of $\Pervcat$.

Let $\creal$ be the dcpo of extended non-negative real numbers
$\Rp \cup \{\infty\}$, with the usual ordering $\leq$ extended by the
stipulation that $r \leq \infty$ for every $r \in \creal$.  We will
always equip $\creal$ with the Scott topology of $\leq$, making it an
object of all the categories mentioned above.  The open subsets of
that Scott topology are the half-open intervals $]t, \infty]$,
$t \in \Rp$, plus $\creal$ and $\emptyset$.

We write $\Lcont (X, \Latt)$ for the set of morphisms from
$(X, \Latt)$ to $\creal$ (implicitly equipped with its Scott
topology), in any full subcategory of $\Pervcat$ containing $\creal$.
In other words, the elements $h$ of $\Lcont (X, \Latt)$ are the
functions $h \colon X \to \creal$ such that $h^{-1} (]t, \infty]) \in
\Latt$ for every $t \in \Rp$.


When $(X, \Latt)$ is a measurable space, $\Lcont (X, \Latt)$ is the
set of all measurable maps from $(X, \Latt)$ to $\creal$ with its
usual Borel $\sigma$-algebra, generated by the
intervals.  
This is because one can write any interval as a Boolean combination of
intervals of the form $]t, \infty]$.  When $(X, \Latt)$ is a
topological space, $\Lcont (X, \Latt)$ is known as the set of
\emph{lower semicontinuous maps} from $(X, \Latt)$ to $\creal$.

If $\Latt$ is an $\omega$-topology (resp., a topology), then
$\Lcont (X, \Latt)$ is an $\omega$cpo (resp., a dcpo) under the
pointwise ordering defined by $h \leq h'$ if and only if
$h (x) \leq h' (x)$ for every $x \in X$; additionally, suprema of
monotone sequences (resp., directed suprema) are computed pointwise:
$(\dsup_{i \in I} h_i) (x) = \dsup_{i \in I} (h_i (x))$.  In order to
see this, it suffices to show that $\dsup_{i \in I} (h_i (x))$ is in
$\Lcont (X, \Latt)$; and the inverse image of $]t, \infty]$ under that
map is $\dcup_{i \in I} h_i^{-1} (]t, \infty])$, since $]t, \infty]$
is Scott-open.

Given any Pervin space $(X, \Latt)$, a \emph{valuation} $\nu$ on
$(X, \Latt)$ is a map $\nu \colon \Latt \to \creal$ that is:
\begin{itemize}
\item \emph{strict}: $\nu (\emptyset) = 0$;
\item \emph{monotonic}: $U \subseteq V$ implies
  $\nu (U) \leq \nu (V)$;
\item \emph{modular}: for all $U, V \in \Latt$, $\nu (U) + \nu (V) =
  \nu (U \cup V) + \nu (U \cap V)$.
\end{itemize}
A \emph{continuous valuation} is a valuation that is Scott-continuous,
and an \emph{$\omega$-continuous valuation} is a valuation that is
$\omega$-continuous.

Continuous valuations have been the cornerstone of the
domain-theoretic semantics of probabilistic languages since Claire
Jones' PhD thesis \cite{jones89,Jones:proba}, and had first been
studied by Nait Saheb-Djahromi \cite{saheb-djahromi:meas}.  The
concept of valuation is older, and dates back to Smiley
\cite{smiley44}, Horn and Tarski \cite{HornTarski48:ext}, and Pettis
\cite{Pettis:ext}, at least; see \cite{KR}.

An $\omega$-continuous valuation on a measurable space $(X, \Latt)$ is
a \emph{measure}.  Measures are usually defined as $\sigma$-additive
maps $\nu \colon \Latt \to \creal$, but the two definitions are
equivalent.  Let us recall that $\nu \colon \Latt \to \creal$ is
\emph{additive} (where $\Latt$ is any lattice of subsets) if and only
if $\nu (\emptyset)=0$ and $\nu (U \cup V) = \nu (U) + \nu (V)$ for
all pairs of two disjoint sets $U, V \in \Latt$, and
\emph{$\sigma$-additive} (where $\Latt$ is any $\omega$-topology) if
and only if $\nu (\bigcup_{i \in I} U_n) = \sum_{i \in I} \nu (U_i)$
for every countable family ${(U_i)}_{i \in I}$ of pairwise disjoint
elements $U_i$ of $\Latt$.  The equivalence of $\omega$-continuous
valuations and $\sigma$-additive maps on $\sigma$-algebras follows
from the following facts.
\begin{itemize}
\item If $\Latt$ is an algebra of subsets, then the additive maps
  $\nu \colon \Latt \to \creal$ are exactly the valuations on
  $(X, \Latt)$.  Indeed, if $\nu$ is additive, then strictness is
  clear, monotonicity follows from the fact that if $U \subseteq V$,
  then $\nu (V) = \nu (V \diff U) + \nu (U) \geq \nu (U)$, and
  modularity from
  $\nu (U) + \nu (V) = \nu (U \diff V) + \nu (U \cap V) + \nu (V) =
  \nu (U \cap V) + \nu (U \cup V)$.  Conversely, any valuation $\nu$
  is additive, since if $U$ and $V$ are disjoint, then $\nu (U \cup V)
  = \nu (U \cup V) + \nu (U \cap V) = \nu (U) + \nu (V)$.
\item If $\Latt$ is an $\omega$-topology, then the $\sigma$-additive
  maps are exactly the $\omega$-continuous, additive maps.  This
  follows from the fact that every countably infinite union
  $\bigcup_{n \in \nat} U_n$ can be written as
  $\dcup_{n \in \nat} \bigcup_{i=0}^n U_i$, plus additivity.
\end{itemize}

Addition is Scott-continuous on $\creal$, and it follows that
valuations on $(X, \Latt)$ form a dcpo under the \emph{stochastic
  ordering}, defined by $\mu \leq \nu$ if and only if
$\mu (U) \leq \nu (U)$ for every $U \in \Latt$; directed suprema are
computed pointwise:
$(\dsup_{i \in I} \nu_i) (U) = \dsup_{i \in I} (\nu_i (U))$.  The same
can be said for continuous valuations on a topological space, or for
$\omega$-continuous valuations on an $\omega$-topological space, hence
also for measures on a measurable space, since suprema commute.

The simplest way to define a notion of integration is by the following
\emph{Choquet formula} \cite[Chapter~VII, Section~48.1,
p. 265]{Choquet:capacities}:
\begin{align}
  \label{eq:Choquet}
  \int_{x \in X} h (x) \;d\nu & \eqdef \int_0^{\infty} \nu (h^{-1} (]t,
                               \infty])) \;dt,
\end{align}
for every function $h \in \Lcont (X, \Latt)$, and for every valuation
$\nu$ on $(X, \Latt)$.  The integral on the right is an ordinary
improper Riemann integral, which is well-defined because the map
$t \mapsto \nu (h^{-1} (]t, \infty]))$ is antitonic (order-reversing).
Indeed, it is easy to see that, for any antitonic map
$f \colon \Rp \to \creal$, $\int_0^\infty f (t) \;dt$ is the supremum
of the monotone sequence of lower Darboux sums
$\sum_{k=1}^{N2^N} f (\frac k {2^N})$, $N \in \nat$.  This was already
observed in the proof of Lemma~4.2 of Regina Tix's master's thesis
\cite{Tix:bewertung}, which also contains the following statement; the
proof boils down to a familiar commutation of suprema.
\begin{fact}[Lemma~4.2, 3rd item, of \cite{Tix:bewertung}]
  \label{fact:R:cont}
  Riemann integration is Scott-continuous in the integrated antitonic
  map.  In particular, for any directed family ${(f_i)}_{i \in I}$
  (countable or not) of antitonic maps from $\Rp$ to $\creal$, in the
  pointwise ordering,
  $\int_0^\infty \dsup_{i \in I} f_i (t) \;dt = \dsup_{i \in I}
  \int_0^\infty f_i (t) \;dt$.
\end{fact}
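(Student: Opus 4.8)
The plan is to reduce everything to the description of the improper Riemann integral recalled just before the statement: for an antitonic $f \colon \Rp \to \creal$ we have $\int_0^\infty f(t)\,dt = \sup_{N\in\nat} S_N(f)$, where $S_N(f)$ is the $N$-th lower Darboux sum from that paragraph — a finite, positively weighted sum of the values $f(k/2^N)$, $1 \le k \le N2^N$ — and $N \mapsto S_N(f)$ is a monotone sequence. First I would record that each $S_N$, and hence $\int_0^\infty (-)\,dt$, is monotonic in its antitonic argument: if $f \le g$ pointwise then every summand $f(k/2^N)$ lies below $g(k/2^N)$, and since addition and multiplication by a positive constant are monotone on $\creal$ we get $S_N(f) \le S_N(g)$, so $\int_0^\infty f(t)\,dt \le \int_0^\infty g(t)\,dt$ after taking the supremum over $N$. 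This already supplies one half of the Scott-continuity assertion, yields the ``$\geq$'' inequality in the displayed identity, ensures that $(S_N(f_i))_{i\in I}$ and $(\int_0^\infty f_i(t)\,dt)_{i\in I}$ are directed families in $\creal$ whenever $(f_i)_{i\in I}$ is directed pointwise, and makes the pointwise supremum $f \eqdef \dsup_{i\in I} f_i$ antitonic, so that the left-hand side of the identity is well-defined.

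The substantive step is then a double commutation of suprema. Fix $N$. Since addition is Scott-continuous on $\creal$ (as noted above), so is any finite sum and so is multiplication by the constant weight occurring in $S_N$; moreover, directedness of $(f_i)_{i\in I}$ in the pointwise ordering means that for any $i,j\in I$ some $f_\ell$ dominates both of them simultaneously at each of the finitely many points $k/2^N$. Hence the directed supremum passes through the finite Darboux sum: $S_N(\dsup_{i\in I} f_i) = \dsup_{i\in I} S_N(f_i)$. Combining this with the Darboux description of the integral and the fact that suprema commute with suprema in the complete lattice $\creal$, I would conclude $\int_0^\infty \bigl(\dsup_{i\in I} f_i(t)\bigr)\,dt = \sup_{N} S_N(\dsup_{i\in I} f_i) = \sup_{N}\dsup_{i\in I} S_N(f_i) = \dsup_{i\in I}\sup_{N} S_N(f_i) = \dsup_{i\in I}\int_0^\infty f_i(t)\,dt$.

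I do not expect a genuine obstacle: as the surrounding text anticipates, the argument ``boils down to a familiar commutation of suprema''. The one thing to be careful about is the order-theoretic bookkeeping that keeps every intermediate supremum legitimate — that the inner supremum over $i$ is along a directed family (by the monotonicity of $S_N$ established first) and that the outer supremum over $N$ runs along a monotone sequence — but since $\creal$ is a complete lattice all of these suprema exist unconditionally, so this is a formality rather than a difficulty. The full Scott-continuity statement is then simply the conjunction of the monotonicity proved at the outset with the preservation of directed suprema just shown.
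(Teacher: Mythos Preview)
Your proposal is correct and is precisely the argument the paper has in mind: it only sketches the proof by pointing to the lower-Darboux-sum description of $\int_0^\infty f(t)\,dt$ and saying the result ``boils down to a familiar commutation of suprema,'' which is exactly the double interchange $\sup_N \dsup_{i} S_N(f_i) = \dsup_{i} \sup_N S_N(f_i)$ you carry out.
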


Equation (\ref{eq:Choquet}) makes sense for more general set functions
$\nu$ than just valuations, but we will not make use of this.  We also
write $\int h \;d\nu$ for $\int_{x \in X} h (x) \;d\nu$.

We sum up the main properties of the Choquet integral in the following
proposition; $h$, $h'$ and $h_i$ stand for a arbitrary elements of
$\Lcont (X, \Latt)$, $\mu$, $\nu$ and $\nu_i$ for valuations on
$(X, \Latt)$, $a$ and $b$ are arbitrary elements of $\Rp$.  Addition
and multiplication on $\creal$ are defined in the obvious way, with
the caveat that $0.\infty = \infty.0 = 0$, so as to ensure that
multiplication, not just addition, is Scott-continuous.  On spaces of
$\creal$-valued maps and of valuations, addition and scalar
multiplication are defined pointwise.  The \emph{characteristic map}
$\chi_U \colon X \to \creal$ maps every $x \in U$ to $1$ and all other
points to $0$; $\chi_U$ is in $\Lcont (X, \Latt)$ if and only if
$U \in \Latt$.  The \emph{Dirac valuation} $\delta_x$ maps every
$U \in \Latt$ to $1$ if $x \in U$, to $0$ otherwise; namely,
$\delta_x (U) = \chi_U (x)$.  Given a morphism
$f \colon (X, \Latt) \to (Y, \Latt')$, the \emph{image valuation}
$f [\nu]$ of any valuation $\nu$ on $(X, \Latt)$ is defined by
$f [\nu] (V) \eqdef \nu (f^{-1} (V))$; this is a valuation, resp.\ an
$\omega$-continuous valuation, resp.\ a measure, resp.\ a continuous
valuation if $\nu$ is.
\begin{proposition}
  \label{prop:Choquet:props}
  Choquet integration is:
  \begin{enumerate}[label=(\roman*)]
  \item linear in the valuation: $\int h \;d(a\mu + b\nu) = a \int h
    \;d\mu + b \int h \;d\nu$;
  \item Scott-continuous in the valuation: $\int h \;d \sup_{i \in I}
    \nu_i = \dsup_{i \in I} \int h \;d\nu_i$ if ${(\nu_i)}_{i \in I}$ is
    directed;
  \item linear in the integrated function if $(X, \Latt)$ is an
    $\omega$-topological space and $\nu$ is an
    $\omega$-continuous valuation: $\int (a h +b h') \;d\nu = a \int h
    \;d\nu + b \int h' \;d\nu$;
  \item $\omega$-continuous in the integrated function if $(X, \Latt)$
    is an $\omega$-topological space and $\nu$ is $\omega$-continuous
    (in particular,
    $\int \dsup_{i \in \nat} h_i \;d\nu = \dsup_{i \in \nat} h_i \;d\nu$),
    and Scott-continuous if $(X, \Latt)$ is a topological space and
    $\nu$ is a continuous valuation (notably,
    $\int \dsup_{i \in I} h_i \;d\nu = \dsup_{i \in I} \int h_i \;d\nu$ if
    ${(h_i)}_{i \in I}$ is directed).
  \end{enumerate}
  Additionally,
  \begin{enumerate}[resume*]
  \item $\int \chi_U \;d\nu = \nu (U)$ for every $U \in \Latt$;
  \item $\int h \;d\delta_x = h (x)$ for every $x \in X$.
  \end{enumerate}
\end{proposition}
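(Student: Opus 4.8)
Items (v) and (vi) are direct readings of the Choquet formula~\eqref{eq:Choquet}: for $U\in\Latt$ one has $\chi_U^{-1}(]t,\infty])=U$ for $0\le t<1$ and $=\emptyset$ for $t\ge1$, so the right-hand integral is $\int_0^1\nu(U)\,dt=\nu(U)$; and $\delta_x(h^{-1}(]t,\infty]))$ is $1$ for $t<h(x)$ and $0$ otherwise, so the right-hand integral is $\int_0^{h(x)}1\,dt=h(x)$. For (i) and (ii) the plan is first to note that $f\mapsto\int_0^\infty f(t)\,dt$ is additive and positively homogeneous on antitonic maps $\Rp\to\creal$: this is immediate from the lower-Darboux-sum description recalled just before Fact~\ref{fact:R:cont}, since finite addition and scalar multiplication are Scott-continuous on $\creal$ and suprema commute. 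Then (i) follows by applying this to the antitonic maps $t\mapsto\mu(h^{-1}(]t,\infty]))$ and $t\mapsto\nu(h^{-1}(]t,\infty]))$ together with the pointwise definition of $a\mu+b\nu$, and (ii) is Fact~\ref{fact:R:cont} applied to the directed family $f_i\colon t\mapsto\nu_i(h^{-1}(]t,\infty]))$, since directed suprema of valuations are computed pointwise.

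I would prove (iv) next, as it is needed for (iii). The key point is that $]t,\infty]$ is Scott-open, so for a monotone sequence (resp.\ a directed family) ${(h_i)}$ in $\Lcont(X,\Latt)$ one has $(\dsup_i h_i)^{-1}(]t,\infty])=\dcup_i h_i^{-1}(]t,\infty])$ for every $t\in\Rp$; then $\omega$-continuity (resp.\ Scott-continuity) of $\nu$ rewrites $\nu((\dsup_i h_i)^{-1}(]t,\infty]))$ as $\dsup_i\nu(h_i^{-1}(]t,\infty]))$, and Fact~\ref{fact:R:cont} pulls the supremum out of the Riemann integral.

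The work is concentrated in (iii). Scalar homogeneity $\int(ah)\,d\nu=a\int h\,d\nu$ is the change of variables $s=t/a$ in~\eqref{eq:Choquet} when $a>0$, and is trivial when $a=0$. Additivity $\int(h+h')\,d\nu=\int h\,d\nu+\int h'\,d\nu$ I would establish in three stages. \emph{Stage 1 (one characteristic summand).} For $h\in\Lcont(X,\Latt)$, $V\in\Latt$, $b\in\Rp$, one checks that $(h+b\chi_V)^{-1}(]t,\infty])=(V\cap B_{t-b})\cup B_t$ for every $t\ge0$, where $B_s\eqdef h^{-1}(]s,\infty])$ for $s\ge0$ and $B_s\eqdef X$ for $s<0$; in particular this set lies in $\Latt$ (even though $X\diff V$ may not), which incidentally shows $h+b\chi_V\in\Lcont(X,\Latt)$. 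Modularity of $\nu$ on $V\cap B_{t-b}$ and $B_t$ (whose intersection is $V\cap B_t$) gives
\[
  \nu\bigl((h+b\chi_V)^{-1}(]t,\infty])\bigr)+\nu(V\cap B_t)=\nu(V\cap B_{t-b})+\nu(B_t),
\]
and integrating this over $t\in[0,\infty)$ — using additivity of Riemann integration on antitonic maps and the translation $s=t-b$, which contributes the extra term $\int_{-b}^0\nu(V)\,ds=b\,\nu(V)$ — yields
\[
  \int(h+b\chi_V)\,d\nu+\int_0^\infty\nu(V\cap B_t)\,dt=b\,\nu(V)+\int_0^\infty\nu(V\cap B_t)\,dt+\int h\,d\nu .
\]
Since $\nu(V\cap B_t)\le\nu(B_t)$, the integral $\int_0^\infty\nu(V\cap B_t)\,dt$ is at most $\int h\,d\nu$; when $\int h\,d\nu<\infty$ one cancels it and obtains $\int(h+b\chi_V)\,d\nu=\int h\,d\nu+b\,\nu(V)$, and when $\int h\,d\nu=\infty$ both sides of the desired identity are $\infty$, the left one by monotonicity of the integral since $h+b\chi_V\ge h$. \emph{Stage 2 (step functions).} Iterating Stage 1, for any finite sum $h'=\sum_{l=1}^m b_l\chi_{V_l}$ with $b_l\in\Rp$ and $V_l\in\Latt$ (each partial sum $h+\sum_{l\le j}b_l\chi_{V_l}$ again lying in $\Lcont(X,\Latt)$ by the same observation) one gets $\int(h+h')\,d\nu=\int h\,d\nu+\sum_l b_l\nu(V_l)$; taking $h=0$ gives $\int h'\,d\nu=\sum_l b_l\nu(V_l)$, hence $\int(h+h')\,d\nu=\int h\,d\nu+\int h'\,d\nu$ for all such $h'$. \emph{Stage 3 (passage to the limit).} For arbitrary $h'\in\Lcont(X,\Latt)$ take its dyadic approximants $h'_N\eqdef\sum_{k=1}^{N2^N}2^{-N}\chi_{h'^{-1}(]k2^{-N},\infty])}$, a monotone sequence with $\dsup_N h'_N=h'$ and, by the Darboux description again, $\dsup_N\int h'_N\,d\nu=\int h'\,d\nu$; since $h+h'_N$ is a monotone sequence with supremum $h+h'$, part (iv) gives $\int(h+h')\,d\nu=\dsup_N\int(h+h'_N)\,d\nu$, and Stage 2 together with Scott-continuity of addition on $\creal$ finishes the proof. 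Finally, combining additivity with scalar homogeneity gives the stated $\int(ah+bh')\,d\nu=a\int h\,d\nu+b\int h'\,d\nu$.

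The main obstacle is this additivity in the integrated function, and within it two delicate points: checking that $h+b\chi_V$ is still a morphism to $\creal$ even though $\Latt$ need not be closed under complements (handled by the explicit formula for its super-level sets), and the cancellation in Stage 1, which is legitimate only after splitting according to whether $\int h\,d\nu$ is finite. Everything else is bookkeeping with~\eqref{eq:Choquet} and Fact~\ref{fact:R:cont}.
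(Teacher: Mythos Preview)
Your proof is correct and follows essentially the same plan as the paper's: (v), (vi), (i), (ii), (iv) are handled identically, and for (iii) both you and the paper reduce to the one-summand case $\int(h+b\chi_V)\,d\nu=\int h\,d\nu+b\,\nu(V)$ via modularity, then iterate to step functions and pass to the limit using (iv) and the dyadic approximants. The only differences are cosmetic: you write the super-level set uniformly as $(V\cap B_{t-b})\cup B_t$ with the convention $B_s=X$ for $s<0$, apply modularity once and then translate, and do the finite/infinite case split on $\int h\,d\nu$; the paper instead splits the integral at $t=\epsilon$ first, proves the auxiliary identity~\eqref{eq:choquet:add} with the case split on $\int_\epsilon^\infty\nu(h^{-1}(]t,\infty])\cap U)\,dt$, and then applies modularity a second time on the $[0,\epsilon]$ piece---same ingredients, slightly different bookkeeping.
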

\begin{proof}
  The argument follows classical lines, and most notably
  \cite[Section~4]{Tix:bewertung}.
  
  Item~$(i)$ follows from the fact that Riemann integration is itself
  linear, and $(ii)$ follows from Fact~\ref{fact:R:cont}; monotonicity
  is clear.  Item~$(v)$ follows from the fact that
  $(\dsup_{i \in I} h_i)^{-1} (]t, \infty]) = \dcup_{i \in I} h_i^{-1}
  (]t, \infty]$, $\nu$ is Scott-continuous and Fact~\ref{fact:R:cont}.
  Item $(iv)$ is proved similarly.  As far as $(v)$ is concerned, we
  have
  $\int \chi_U \;d\nu = \int_0^\infty \nu (\chi_U^{-1} (]t, \infty))
  \;dt = \int_0^\infty f (t) \;dt$ where $f$ maps every $t \in [0, 1[$
  to $\nu (U)$ and every $t \geq 1$ to $0$.  For $(vi)$,
  $\int h \;d\delta_x = \int_0^\infty \delta_x (h^{-1} (]t, \infty]))
  \;dt = \int_0^\infty g (t) \;dt$ where $g$ maps every $t < h (x)$ to
  $1$, and every $t \geq h (x)$ to $0$.  
  The only
  tricky point is to show item~$(iii)$.

  First, we have
  $\int ah \;d\nu = \int_0^{\infty} \nu ({ah}^{-1} (]t, \infty])$.  If
  $a=0$, this is equal to $0 = a . \int h \;d\nu$.  If
  $a \neq 0, \infty$, this is equal to
  $\int_0^{\infty} \nu (h^{-1} (]t/a, \infty]) \;dt = \int_0^{\infty}
  \nu (h^{-1} (]u, \infty]) . a \;du = a \int_0^\infty \nu (h^{-1} (]u,
  \infty]) \;du = a \int h \;d\nu$.  Hence $\int ah \;d\nu = a \int h \;d\nu$
  for every $a \in \Rp$; this also holds when $a=\infty$ by $(iv)$,
  since $\infty = \dsup \nat$.  Hence it suffices to show that
  $\int (h+h') \;d\nu = \int h \;d\nu + \int h \;d\nu'$.

  We proceed in steps.  We fix $h$.  For every $\epsilon \in \Rp$, and
  for every $U \in \Latt$, we claim that:
  \begin{align}
    \label{eq:choquet:add}
    & \int_\epsilon^\infty \nu (h^{-1} (]t, \infty]) \cup (h^{-1}
      (]t-\epsilon, \infty]) \cap U)) \;dt \\
    \nonumber
    & = \int_\epsilon^\infty \nu
      (h^{-1} (]t, \infty])) \;dt + \int_0^\epsilon \nu (h^{-1} (]t,
      \infty]) \cap U) \;dt.
  \end{align}
  If
  $\int_\epsilon^\infty \nu (h^{-1} (]t, \infty]) \cap U) \;dt <
  \infty$, then we reason as follows.  By the modularity law, the fact
  that the intersection of $h^{-1} (]t, \infty])$ with
  $h^{-1} (]t-\epsilon, \infty]) \cap U$ simplifies to
  $h^{-1} (]t, \infty]) \cap U$, and the usual properties of Riemann
  integrals,
  \begin{align*}
    & \int_\epsilon^\infty \nu (h^{-1} (]t, \infty]) \cup (h^{-1}
      (]t-\epsilon, \infty]) \cap U)) \;dt \\
    & = \int_\epsilon^\infty \nu (h^{-1} (]t, \infty])) \;dt
      + \int_\epsilon^\infty \nu (h^{-1}
      (]t-\epsilon, \infty]) \cap U) \;dt \\
    & \qquad - \int_\epsilon^\infty \nu (h^{-1} (]t, \infty]) \cap U)
      \;dt \\
    & = \int_\epsilon^\infty \nu (h^{-1} (]t, \infty])) \;dt
      + \int_0^\infty \nu (h^{-1}
      (]t, \infty]) \cap U) \;dt \\
    & \qquad - \int_\epsilon^\infty \nu (h^{-1} (]t, \infty]) \cap U)
      \;dt \\
    & = \int_\epsilon^\infty \nu (h^{-1} (]t, \infty])) \;dt
      + \int_0^\epsilon \nu (h^{-1} (]t, \infty]) \cap U) \;dt.
  \end{align*}
  If
  $\int_\epsilon^\infty \nu (h^{-1} (]t, \infty]) \cap U) \;dt =
  \infty$, then since $h^{-1} (]t, \infty]) \cap U$ is included in
  $h^{-1} (]t, \infty]) \cup (h^{-1} (]t-\epsilon, \infty]) \cap U)$,
  both sides of (\ref{eq:choquet:add}) are
  equal to $\infty$.

  Now, $\int (h+\epsilon \chi_U) \;d\nu$ is equal to
  $\int_0^\infty (h + \epsilon \chi_U)^{-1} (]t, \infty]) \;dt$, and
  $(h + \epsilon \chi_U)^{-1} (]t, \infty])$ is equal to
  $h^{-1} (]t, \infty]) \cup U$ if $t < \epsilon$ and to
  $h^{-1} (]t, \infty]) \cup (h^{-1} (]t-\epsilon, \infty]) \cap U)$
  otherwise.  Therefore:
  \begin{align*}
    & \int (h+\epsilon \chi_U) \;d\nu \\
    & = \int_0^\epsilon \nu (h^{-1} (]t, \infty]) \cup U) \;dt
      + \int_\epsilon^\infty \nu (h^{-1} (]t, \infty]) \cup (h^{-1}
      (]t-\epsilon, \infty]) \cap U)) \;dt \\
    & = \int_0^\epsilon \nu (h^{-1} (]t, \infty]) \cup U) \;dt
      + \int_\epsilon^\infty \nu (h^{-1} (]t, \infty])) \;dt \\
    & \qquad + \int_0^\epsilon \nu (h^{-1} (]t, \infty]) \cap U) \;dt
    \qquad \text{by (\ref{eq:choquet:add})} \\
    & = \int_0^\epsilon \nu (h^{-1} (]t, \infty]) \;dt +
      \int_\epsilon^\infty \nu (h^{-1} (]t, \infty])) \;dt \\
    & \qquad + \int_0^\epsilon \nu (U) \;dt
    \qquad\qquad\qquad \text{by modularity of $\nu$ under the $\int_0^\epsilon$
      terms} \\
    & = \int h \;d\nu + \epsilon \nu (U).
  \end{align*}
  This being done, let a \emph{very simple function} be any map $h'$
  of the form $\epsilon \sum_{i=1}^n \chi_{U_i}$ where
  $\epsilon \in \Rp$ and $U_i \in \Latt$.  By induction on $n$, and
  using what we have just proved, we obtain that
  $\int (h+h') \;d\nu = \int h \;d\nu + \int h' \;d\nu$.

  Finally, every $h' \in \Lcont (X, \Latt)$ is the supremum of the
  monotone sequence of very simple functions
  $h'_N \eqdef \frac 1 {2^N} \sum_{i=1}^{N2^N} \chi_{{h'}^{-1}
    (]i/2^N, \infty])}$, $N \in \nat$.  Then
  $\int (h+h') \;d\nu = \dsup_{N \in \nat} \int (h+h'_N) \;d\nu = \dsup_{N
    \in \nat} (\int h \;d\nu + \int h'_N \;d\nu) = \int h \;d\nu + \int h'
  \;d\nu$ by using $(iv)$.
\end{proof}

Property~$(iv)$ is usually called the monotone convergence theorem (or
the Beppo Levi theorem) when applied to measurable spaces and measures.

We will also use the following baby version of the Riesz
representation theorem.  A \emph{linear} map
$F \colon \Lcont (X, \Latt) \to \creal$ is one such that
$F (ah)=aF(h)$ for all $a \in \Rp$ (\emph{positive homogeneity}) and
$h \in \Lcont (X, \Latt)$ and $F (h+h')=F(h)+F(h')$ for all
$h, h' \in \Lcont (X, \Latt)$ (\emph{additivity}).  It is equivalent
to require $F (ah+bh') = a F (h) + b F (h')$ for all $a, b \in \Rp$
and $h, h' \in \Lcont (X, \Latt)$; if $F$ is $\omega$-continuous, then
this extends to the cases where $a$ or $b$ or both is equal to
$\infty$.

\begin{proposition}
  \label{prop:riesz}
  Let $(X, \Latt)$ be an $\omega$-topological space (resp., a
  topological space).  There is a one-to-one correspondence between
  $\omega$-continuous (resp., continuous) valuations $\nu$ on
  $(X, \Latt)$ and linear $\omega$-continuous (resp.,
  Scott-continuous) maps $F \colon \Lcont (X, \Latt) \to \creal$.  In
  one direction, $F (h) \eqdef \int h \;d\nu$, and in the other
  direction, $\nu (U) \eqdef F (\chi_U)$.
\end{proposition}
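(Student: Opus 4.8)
The plan is to check that the two assignments $\nu \mapsto F$ with $F(h) \eqdef \int h\,d\nu$, and $F \mapsto \nu$ with $\nu(U) \eqdef F(\chi_U)$, are well-defined between the two stated classes, and then that they are mutually inverse. I describe the $\omega$-topological, $\omega$-continuous case; the topological, continuous case is entirely analogous, reading ``directed family'' for ``monotone sequence'', ``Scott-continuous'' for ``$\omega$-continuous'', and invoking the Scott-continuous halves of Proposition~\ref{prop:Choquet:props}. For the first assignment, $F(h) = \int h\,d\nu$ is a well-defined element of $\creal$ for every $h \in \Lcont(X,\Latt)$ since the Choquet integral of an antitonic map is always defined; that $F$ is linear is Proposition~\ref{prop:Choquet:props}$(iii)$, and that $F$ is $\omega$-continuous is Proposition~\ref{prop:Choquet:props}$(iv)$, both of which use exactly the hypotheses that $(X,\Latt)$ is $\omega$-topological and $\nu$ is $\omega$-continuous.

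For the second assignment, given a linear $\omega$-continuous $F \colon \Lcont(X,\Latt) \to \creal$, I set $\nu(U) \eqdef F(\chi_U)$, which is legitimate since $\chi_U \in \Lcont(X,\Latt)$ whenever $U \in \Latt$, and check the axioms of a valuation. Strictness is $\nu(\emptyset) = F(\chi_\emptyset) = 0$, since $\chi_\emptyset$ is the zero function and $F$ is positively homogeneous. Monotonicity holds because $U \subseteq V$ gives $\chi_U \leq \chi_V$ pointwise, hence in $\Lcont(X,\Latt)$, and an $\omega$-continuous map is in particular monotonic. Modularity cannot be obtained through set differences, since $\Latt$ need not be closed under complements; instead one uses the pointwise identity $\chi_U + \chi_V = \chi_{U \cup V} + \chi_{U \cap V}$, valid for arbitrary subsets, together with additivity of $F$, giving $\nu(U) + \nu(V) = F(\chi_U + \chi_V) = F(\chi_{U \cup V} + \chi_{U \cap V}) = \nu(U \cup V) + \nu(U \cap V)$. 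Finally, for $\omega$-continuity of $\nu$, let $(U_n)_{n \in \nat}$ be a monotone sequence in $\Latt$; its union $U$ lies in $\Latt$ because $\Latt$ is an $\omega$-topology, and $\chi_U = \dsup_{n \in \nat} \chi_{U_n}$, a supremum computed pointwise in $\Lcont(X,\Latt)$ and therefore taken there; applying $\omega$-continuity of $F$ yields $\nu(U) = \dsup_{n \in \nat} \nu(U_n)$.

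It remains to see that the two constructions are inverse to one another. Starting from a valuation $\nu$, the round trip produces $U \mapsto \int \chi_U\,d\nu$, which equals $\nu$ by Proposition~\ref{prop:Choquet:props}$(v)$. The reverse round trip is the only substantive point: starting from $F$, set $\nu(U) = F(\chi_U)$ and $F'(h) = \int h\,d\nu$, and show $F' = F$. On any very simple function $h = \epsilon \sum_{i=1}^n \chi_{U_i}$, linearity of $F$ gives $F(h) = \epsilon \sum_{i=1}^n \nu(U_i)$, while Proposition~\ref{prop:Choquet:props}$(iii)$ and $(v)$ give $F'(h) = \epsilon \sum_{i=1}^n \nu(U_i)$ as well, so $F$ and $F'$ agree there. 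Then I use that every $h \in \Lcont(X,\Latt)$ is the supremum of the monotone sequence of very simple functions $h_N = \frac{1}{2^N} \sum_{i=1}^{N 2^N} \chi_{h^{-1}(]i/2^N, \infty])}$ (as in the proof of Proposition~\ref{prop:Choquet:props}), and that both $F$ and $F'$ are $\omega$-continuous, to conclude $F(h) = \dsup_{N \in \nat} F(h_N) = \dsup_{N \in \nat} F'(h_N) = F'(h)$. The expected main obstacle is exactly this last step: one needs that a linear $\omega$-continuous functional is determined by its restriction to characteristic maps, which hinges on the monotone approximation by very simple functions together with the $\omega$-continuity hypothesis — without it, distinct functionals could restrict to the same valuation.
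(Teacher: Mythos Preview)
Your proof is correct and follows essentially the same route as the paper's: both verify that the two assignments are well-defined using Proposition~\ref{prop:Choquet:props}~$(iii)$--$(v)$ and the identity $\chi_U + \chi_V = \chi_{U\cup V} + \chi_{U\cap V}$, and both establish that $F_{\nu_F} = F$ by checking agreement on very simple functions via linearity and then passing to general $h \in \Lcont(X,\Latt)$ through the monotone approximation $h = \dsup_N h_N$ and $\omega$-continuity. Your treatment is in fact slightly more explicit than the paper's (you spell out monotonicity of $\nu$ separately), but the argument is the same.
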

\begin{proof}
  We deal with the $\omega$-continuous case only, since the continuous
  case is similar.  The continuous case was also dealt with by Tix
  \cite[Satz~4.16]{Tix:bewertung}, using similar arguments.  Given an
  $\omega$-continuous valuation $\nu$, the map
  $F_\nu \colon h \mapsto \int h \;d\nu$ is $\omega$-continuous and
  linear by items~$(ii)$ and $(iv)$ of
  Proposition~\ref{prop:Choquet:props}.  Conversely, given an
  $\omega$-continuous linear map
  $F \colon \Lcont (X, \Latt) \to \creal$, we define
  $\nu_F (U) \eqdef F (\chi_U)$.  Then $\nu_F$ is strict since $F$
  maps the constant $0$ map to $0$ by positive homogeneity,
  $\omega$-continuous since $F$ is, and since the map
  $U \mapsto \chi_U$ is itself $\omega$-continuous, and modular
  because of the equality
  $\chi_U+\chi_V = \chi_{U \cup V} + \chi_{U \cap V}$ and the
  additivity of $F$.  We have $\nu_{F_\nu} = \nu$, because for every
  $U \in \Latt$,
  $\nu_{F_\nu} (U) = F_\nu (\chi_U) = \int \chi_U \;d\nu = \nu (U)$ by
  item~$(v)$ of Proposition~\ref{prop:Choquet:props}.  In order to
  show that $F_{\nu_F} = F$, we realize that
  $F_{\nu_F} (\chi_U) = \int \chi_U \;d\nu_F = \nu_F (U) = F (\chi_U)$
  by item~$(v)$ of Proposition~\ref{prop:Choquet:props}.  Then, by the
  linearity of the integral (item~$(iii)$), $F_{\nu_F} (h) = F (h)$
  for every very simple function (as introduced in the proof of
  Proposition~\ref{prop:Choquet:props}), and since every element of
  $\Lcont (X, \Latt)$ is a supremum of a monotone sequence of very
  simple functions, we conclude by the $\omega$-continuity of $F$ and
  of $F_{\nu_F}$ (item~$(iv)$) that $F_{\nu_F} = F$.
\end{proof}

\section{Density maps}
\label{sec:density-maps}

\begin{lemma}
  \label{lemma:gmu}
  Let $(X, \Latt)$ be an $\omega$-topological space, let
  $g \in \Lcont (X, \Latt)$, and $\mu$ be an $\omega$-continuous
  valuation on $(X, \Latt)$.

  The map that sends every $h \in \Lcont (X, \Latt)$ to
  $\int h g \;d\mu$ is well-defined, linear and $\omega$-continuous.

  It is Scott-continuous provided that $\Latt$ is a topology and $\mu$
  is Scott-continuous.
\end{lemma}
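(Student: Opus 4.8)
The plan is to reduce the linearity and continuity claims to Proposition~\ref{prop:Choquet:props}, once we know that the pointwise product $hg$ (computed with the convention $0\cdot\infty = 0$, so that multiplication is Scott-continuous on $\creal$) again belongs to $\Lcont(X,\Latt)$. Establishing this is the only genuinely non-formal point. For $t = 0$ it is immediate, since $(hg)^{-1}(]0,\infty]) = h^{-1}(]0,\infty]) \cap g^{-1}(]0,\infty])$ is a finite intersection of members of $\Latt$. For $t > 0$ I would use the elementary fact that, for $a, b \in \creal$, one has $ab > t$ if and only if there is a positive rational $q$ with $a > t/q$ and $b > q$ (both directions being a short case analysis on whether $a$, $b$ are infinite, using the density of $\rat$ in $\Rp$). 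Hence
\[
  (hg)^{-1}(]t,\infty]) \;=\; \bigcup_{q \in \rat,\, q > 0} \bigl( h^{-1}(]t/q,\infty]) \cap g^{-1}(]q,\infty]) \bigr),
\]
a countable union of members of $\Latt$, so it lies in $\Latt$ because $\Latt$ is an $\omega$-topology (and a fortiori when $\Latt$ is a topology). Thus $hg \in \Lcont(X,\Latt)$ and $\int hg\,d\mu$ is well-defined. I would also record here that $h \mapsto hg$ is monotone and, since multiplication on $\creal$ is Scott-continuous and suprema in $\Lcont(X,\Latt)$ are taken pointwise, that it preserves suprema of monotone sequences (and of directed families when $\Latt$ is a topology): $(\dsup_i h_i)\,g = \dsup_i (h_i g)$.

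For linearity I would verify the pointwise identity $(ah+bh')g = a(hg) + b(h'g)$ for $a,b \in \Rp$ (once more a routine case analysis on the value of $g(x)$, using $0\cdot\infty = 0$) and then apply Proposition~\ref{prop:Choquet:props}(iii) to the functions $hg$, $h'g \in \Lcont(X,\Latt)$ and the $\omega$-continuous valuation $\mu$; the case of infinite coefficients then follows from $\omega$-continuity as in the remarks preceding Proposition~\ref{prop:riesz}. For $\omega$-continuity, given a monotone sequence ${(h_n)}_{n\in\nat}$ in $\Lcont(X,\Latt)$, the functions $h_n g$ form a monotone sequence in $\Lcont(X,\Latt)$ with pointwise supremum $(\dsup_n h_n)\,g$, so Proposition~\ref{prop:Choquet:props}(iv) gives $\int (\dsup_n h_n)\,g\,d\mu = \dsup_n \int h_n g\,d\mu$.

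The final assertion is proved in exactly the same way: when $\Latt$ is a topology and $\mu$ is Scott-continuous, $\Lcont(X,\Latt)$ is a dcpo, the displayed formula still shows $hg \in \Lcont(X,\Latt)$, the map $h \mapsto hg$ preserves directed suprema, and the Scott-continuous half of Proposition~\ref{prop:Choquet:props}(iv) yields $\int (\dsup_{i\in I} h_i)\,g\,d\mu = \dsup_{i\in I} \int h_i g\,d\mu$ for every directed family ${(h_i)}_{i\in I}$.

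The main obstacle is the well-definedness step, i.e.\ checking via the rational decomposition above that $hg$ is again a morphism into $\creal$; the linearity and continuity statements are then bookkeeping on top of Proposition~\ref{prop:Choquet:props}.
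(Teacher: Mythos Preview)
Your proposal is correct and follows essentially the same route as the paper: first verify that $hg\in\Lcont(X,\Latt)$ by a rational decomposition of $(hg)^{-1}(]t,\infty])$ as a countable union of finite intersections of members of $\Latt$, then read off linearity and ($\omega$- or Scott-)continuity from Proposition~\ref{prop:Choquet:props} combined with the fact that $h\mapsto hg$ is linear and preserves the relevant suprema. The only cosmetic difference is that the paper parametrises the union by pairs $p,q\in\rat$ with $pq>t$, whereas you use a single rational $q$ together with $t/q$; both give the same countable union, and you are slightly more explicit in handling $t=0$ and the pointwise identity $(ah+bh')g=a(hg)+b(h'g)$.
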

\begin{proof}
  We must first show that the integral makes sense, namely that the
  product map $hg$ is in $\Lcont (X, \Latt)$.  The multiplication map
  $a, b \mapsto ab$ is Scott-continuous from $\creal \times \creal$ to
  $\creal$, hence, for every $t > 0$, $ab > t$ if and only if there
  are two rational numbers $p, q > 0$ such that $p>a$, $q>b$ and
  $pq>t$.  For every $t > 0$, $(hg)^{-1} (]t, \infty])$ is then equal
  to
  $\bigcup_{p, q \in \rat, pq>t} h^{-1} (]p, \infty]) \cap g^{-1} (]q,
  \infty])$.  That is an infinite countable union, hence it is in
  $\Latt$.  Therefore $hg$ is in $\Lcont (X, \Latt)$.

  Since product by $g$ is linear and
  $\omega$-continuous (even Scott-continuous), the remaining claims
  follow from items $(iv)$ and $(v)$ of
  Proposition~\ref{prop:Choquet:props}.
\end{proof}
Proposition~\ref{prop:riesz} then turns this $\omega$-continuous
linear function into an $\omega$-continuous valuation, defined as
follows.
\begin{definition}
  \label{defn:gmu}
  For every $\omega$-topological space $(X, \Latt)$, for every
  $g \in \Lcont (X, \Latt)$, and for every $\omega$-continuous
  valuation $\mu$ on $(X, \Latt)$, we define:
  \begin{align}
    \label{eq:density}
    (g \cdot \mu) (U) & \eqdef \int \chi_U. g \;d\mu
  \end{align}
  for every $U \in \Latt$.
\end{definition}
Lemma~\ref{lemma:gmu} and Proposition~\ref{prop:riesz} together
yield the following.
\begin{proposition}
  \label{prop:gmu}
  For every $\omega$-topological space $(X, \Latt)$, for every
  $g \in \Lcont (X, \Latt)$, and for every $\omega$-continuous
  valuation $\mu$ on $(X, \Latt)$,
  \begin{enumerate}[label=(\roman*)]
  \item $g \cdot \mu$ is an $\omega$-continuous valuation;
  \item $g \cdot \mu$ is a continuous valuation if $(X, \Latt)$ is a
    topological space and $\mu$ is a continuous valuation;
  \item For every $h \in \Lcont (X, \Latt)$,
    \begin{align}
      \label{eq:gmu:int}
      \int h \;d (g \cdot \mu) & = \int h g \;d\mu.
    \end{align}
  \end{enumerate}
\end{proposition}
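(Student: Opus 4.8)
The plan is to read off all three items from the two results just established, with no new work. By Lemma~\ref{lemma:gmu}, the map $F \colon h \mapsto \int h g \;d\mu$ is a well-defined, linear, $\omega$-continuous map from $\Lcont (X, \Latt)$ to $\creal$, and it is even Scott-continuous when $\Latt$ is a topology and $\mu$ is Scott-continuous. Proposition~\ref{prop:riesz} therefore associates to $F$ an $\omega$-continuous valuation $\nu_F$ (resp.\ a continuous valuation, in the topological case), characterised by $\nu_F (U) = F (\chi_U)$ for all $U \in \Latt$. By Definition~\ref{defn:gmu}, $\nu_F (U) = \int \chi_U g \;d\mu = (g \cdot \mu) (U)$, so $g \cdot \mu = \nu_F$. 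This gives items~$(i)$ and $(ii)$ directly.

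For item~$(iii)$ I would invoke the other half of the correspondence in Proposition~\ref{prop:riesz}: applied to the $\omega$-continuous valuation $g \cdot \mu$, it states that $h \mapsto \int h \;d(g \cdot \mu)$ is the linear $\omega$-continuous map $F_{g \cdot \mu}$ whose associated valuation is $g \cdot \mu$ itself. Since $g \cdot \mu = \nu_F$ and $F \mapsto \nu_F$ is a bijection with inverse $\nu \mapsto F_\nu$ (again Proposition~\ref{prop:riesz}), we get $F_{g \cdot \mu} = F_{\nu_F} = F$. Evaluating at an arbitrary $h \in \Lcont (X, \Latt)$ yields $\int h \;d(g \cdot \mu) = F_{g \cdot \mu} (h) = F (h) = \int h g \;d\mu$.

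Alternatively, item~$(iii)$ can be proved by hand along the usual ladder: it holds for $h = \chi_U$ by Definition~\ref{defn:gmu}; it extends to very simple functions by linearity of both sides in $h$, using item~$(iii)$ of Proposition~\ref{prop:Choquet:props} on the left and the same item together with Lemma~\ref{lemma:gmu} on the right; and it passes to arbitrary $h \in \Lcont (X, \Latt)$ by $\omega$-continuity of both sides, using item~$(iv)$ of Proposition~\ref{prop:Choquet:props} on the left and Lemma~\ref{lemma:gmu} on the right, since every element of $\Lcont (X, \Latt)$ is the supremum of a monotone sequence of very simple functions.

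There is no genuine obstacle here; the only thing requiring care is the bookkeeping of categories, so that Proposition~\ref{prop:riesz} is applied in the $\omega$-continuous case for $(i)$ and $(iii)$, and separately in the continuous case for $(ii)$ precisely when $\Latt$ is a topology and $\mu$ is Scott-continuous — which is exactly the hypothesis under which Lemma~\ref{lemma:gmu} delivers Scott-continuity of $F$.
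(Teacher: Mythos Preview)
Your proof is correct and follows exactly the route the paper intends: the paper itself does not write out a proof but simply remarks that Lemma~\ref{lemma:gmu} and Proposition~\ref{prop:riesz} together yield the proposition, and your argument is precisely the natural unpacking of that remark. Your use of the bijection in Proposition~\ref{prop:riesz} to obtain item~$(iii)$, and your alternative ladder argument, are both valid and match the machinery the paper has set up.
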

In particular, if $\Latt$ is a $\sigma$-algebra, then $g \cdot \mu$ is
a measure for every measure $\mu$ and every measurable map $g$ from
$X$ to $\creal$.  The measure $g \cdot \mu$ is sometimes written as
$g \;d\mu$.

Given two valuations $\mu$ and $\nu$ on $(X, \Latt)$, one may wonder
when one can write $\nu$ as $g \cdot \mu$ for some suitable map
$g$---this is the goal of this paper.  If $\nu = g \cdot \mu$, then we
will see that $\nu$ and $\mu$ must satisfy two conditions: absolute
continuity, 
and what we call the Hahn decomposition property, after the Hahn
decomposition theorem of measure theory.

\subsection{Absolute continuity} 
\label{sec:absol-cont-strong}

We take the following definition of absolute continuity.  While
different from the usual definition, it is not entirely unusual, see
for example \cite{Bochner:rn}. 

\begin{definition}[Absolute continuity]
  \label{defn:abs:cont}
  Given two valuations $\mu$ and $\nu$ on a Pervin space $(X, \Latt)$,
  we say that $\nu$ is \emph{absolutely continuous} with respect to
  $\mu$ if and only if for every $U_0 \in \Latt$ such that
  $\nu (U_0) < \infty$, for every $\epsilon \in \Rp \diff \{0\}$,
  there is an $\eta \in \Rp \diff \{0\}$ such that for every
  $U \in \Latt$ such that $U \subseteq U_0$ and $\mu (U) < \eta$,
  $\nu (U) < \epsilon$.
\end{definition}

\begin{remark}
  \label{rem:abscont:bounded}
  When $\nu$ is a bounded valuation, the definition of absolute
  continuity simplifies to: for every $\epsilon \in \Rp \diff \{0\}$,
  there is an $\eta \in \Rp \diff \{0\}$ such that for every
  $U \in \Latt$ such that $\mu (U) < \eta$, $\nu (U) < \epsilon$.
\end{remark}

The usual definition of absolute continuity is given as item~(2) in
the following proposition, where we show that it is equivalent in the
case of $\sigma$-finite measures.  A valuation $\nu$ on a Pervin space
$(X, \Latt)$ is \emph{$\sigma$-finite} if and only if there is a
countable family of sets $E_n \in \Latt$, $n \in \nat$, such that
$\bigcup_{n \in \nat} E_n=X$ and $\nu (E_n) < \infty$ for each
$n \in \nat$.  Replacing $E_n$ by $\bigcup_{k=0}^n E_k$ if necessary,
we may assume that ${(E_n)}_{n \in \nat}$ is a monotone sequence.
This definition applies to measures as well, in which case we retrieve
the usual notion of $\sigma$-finiteness.  Considering
Remark~\ref{rem:abscont:bounded}, the following is well-known for
bounded measures \cite[page~422]{Billingsley:probmes}, and the proof
is entirely similar.

\begin{proposition}[Absolute continuity, simplified] 
  \label{prop:abscont:simpl}
  Let $\nu$, $\mu$ be two measures on a measurable space $(X, \Latt)$,
  and consider the following statements.
  \begin{enumerate}
  \item $\nu$ is absolutely continuous with respect to $\mu$;
  \item for every $U \in \Latt$ such that $\mu (U)=0$, $\nu (U)=0$.
  \end{enumerate}
  Then $(2)$ implies $(1)$, and $(1)$ and $(2)$ are equivalent if
  $\nu$ is $\sigma$-finite.
\end{proposition}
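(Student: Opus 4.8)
The plan is to reproduce the classical argument (essentially the one in \cite[page~422]{Billingsley:probmes}), now phrased for the notion of absolute continuity of Definition~\ref{defn:abs:cont}. Throughout I would use that a measure on $(X,\Latt)$ is $\sigma$-additive, hence continuous from below along monotone sequences, and, since $\Latt$ is a $\sigma$-algebra and therefore closed under complements, also continuous from above along decreasing sequences whose first term has finite $\nu$-measure.

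For $(2)\Rightarrow(1)$ I would argue by contradiction, along the lines of the Borel--Cantelli lemma. If $\nu$ is not absolutely continuous with respect to $\mu$, there are $U_0\in\Latt$ with $\nu(U_0)<\infty$ and $\epsilon\in\Rp\diff\{0\}$ such that for every $\eta\in\Rp\diff\{0\}$ some $U\in\Latt$ with $U\subseteq U_0$ and $\mu(U)<\eta$ still has $\nu(U)\ge\epsilon$. Taking $\eta\eqdef 2^{-n}$ for each $n\in\nat$ yields sets $U_n\subseteq U_0$ in $\Latt$ with $\mu(U_n)<2^{-n}$ and $\nu(U_n)\ge\epsilon$. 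Put $B_m\eqdef\bigcup_{n\ge m}U_n$ and $A\eqdef\bigcap_{m\in\nat}B_m$, both in $\Latt$. On the one hand $\mu(A)\le\mu(B_m)\le\sum_{n\ge m}2^{-n}=2^{1-m}$ for every $m$, so $\mu(A)=0$ and hence $\nu(A)=0$ by~(2). On the other hand the $B_m$ decrease with $\nu(B_m)\le\nu(U_0)<\infty$, so continuity from above gives $\nu(A)=\lim_m\nu(B_m)$; since $\nu(B_m)\ge\nu(U_m)\ge\epsilon$ for every $m$, this forces $\nu(A)\ge\epsilon>0$, a contradiction.

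For $(1)\Rightarrow(2)$ under the hypothesis that $\nu$ is $\sigma$-finite, I would fix a monotone sequence ${(E_n)}_{n\in\nat}$ in $\Latt$ with $\bigcup_{n\in\nat}E_n=X$ and $\nu(E_n)<\infty$, and take any $U\in\Latt$ with $\mu(U)=0$. For each $n$, applying absolute continuity to $U_0\eqdef E_n$: given $\epsilon\in\Rp\diff\{0\}$ there is $\eta\in\Rp\diff\{0\}$ such that every $V\in\Latt$ with $V\subseteq E_n$ and $\mu(V)<\eta$ satisfies $\nu(V)<\epsilon$; since $U\cap E_n\subseteq E_n$ lies in $\Latt$ and $\mu(U\cap E_n)\le\mu(U)=0<\eta$, this gives $\nu(U\cap E_n)<\epsilon$. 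As $\epsilon$ is arbitrary, $\nu(U\cap E_n)=0$, and continuity from below yields $\nu(U)=\dsup_{n\in\nat}\nu(U\cap E_n)=0$.

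I do not expect a genuine obstacle here, since this is the textbook proof; the only points requiring a little care are that the appeal to continuity from above in $(2)\Rightarrow(1)$ is legitimate precisely because each $B_m$ sits inside the fixed set $U_0$ of finite $\nu$-measure, and that $\sigma$-finiteness is exactly what is needed to upgrade the ``local'' condition of Definition~\ref{defn:abs:cont}, which constrains only sets of finite $\nu$-measure, to the global statement~(2), by covering $X$ with the sets $E_n$.
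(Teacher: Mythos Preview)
Your proposal is correct and follows essentially the same argument as the paper's own proof: a Borel--Cantelli style contradiction for $(2)\Rightarrow(1)$, and a reduction to the sets $E_n$ of finite $\nu$-measure for $(1)\Rightarrow(2)$. Your version of $(1)\Rightarrow(2)$ is in fact phrased slightly more carefully than the paper's, since you apply Definition~\ref{defn:abs:cont} with $U_0\eqdef E_n$ (for which $\nu(E_n)<\infty$ is known), whereas the paper writes ``$U\cap E_n$ is included in $U$'', which does not by itself supply a $U_0$ of finite $\nu$-measure.
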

\begin{proof}
  Let us assume that $(2)$ holds, but not $(1)$.  There is an
  $\epsilon > 0$ and a set $U_0 \in \Latt$ such that
  $\nu (U_0) < \infty$ and, for every $n \in \nat$, letting
  $\eta \eqdef 1/2^n$, there is an element $V_n \in \Latt$ with
  $V_n \subseteq U_0$ such that $\mu (V_n) < 1/2^n$ but
  $\nu (V_n) \geq \epsilon$.  In particular,
  $\sum_{n=0}^\infty \mu (V_n) < \infty$, so by the first
  Borel-Cantelli lemma \cite[Theorem~4.3]{Billingsley:probmes},
  $\mu (\bigcap_{m \in \nat} \bigcup_{n \in \nat} V_n) = 0$.  Using
  $(2)$, it follows that
  $\nu (\bigcap_{m \in \nat} \bigcup_{n \geq m} V_n) = 0$.  The sets
  $\bigcup_{n \geq m} V_n$ form a decreasing sequence of elements of
  $\Latt$ included in $U_0$, hence of finite $\nu$-measure.  Therefore
  $\inf_{m \in \nat} \nu (\bigcup_{n \geq m} V_n) = 0$.  This is
  impossible, since for every $m \in \nat$,
  $\nu (\bigcup_{n \geq m} V_n) \geq \nu (V_m) \geq \epsilon$.

  Conversely, we assume that $(1)$ holds and that $\nu$ is
  $\sigma$-finite.  Let ${(E_n)}_{n \in \nat}$ be a monotone sequence
  of elements of $\Latt$ covering $X$ and such that
  $\nu (E_n) < \infty$ for every $n \in \nat$.  Let also $U \in \Latt$
  be such that $\mu (U)=0$.  For every $n \in \nat$, $U \cap E_n$ is
  included in $U$, and $\mu (U \cap E_n) = 0$, so by absolute
  continuity, for every $\epsilon > 0$, $\nu (U \cap E_n) < \epsilon$.
  Since $\epsilon$ is arbitrary, $\nu (U \cap E_n)=0$.  Then
  $\nu (U) = \nu (U \cap \dcup_{n \in \nat} E_n) = \dsup_{n \in \nat}
  \nu (U \cap E_n)=0$.
\end{proof}

We will use the following often.
\begin{lemma}
  \label{lemma:gmu:int}
  Let $(X, \Latt)$ be a Pervin space, $\mu$ be a
  valuation on $(X, \Latt)$ and $g \in \Lcont (X, \Latt)$.
  For every $U \in \Latt$, $(g \cdot \mu) (U) = \int_0^\infty
  \mu (U \cap g^{-1} (]t, \infty])) \;dt$.
\end{lemma}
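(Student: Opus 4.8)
The plan is to unwind Definition~\ref{defn:gmu}, i.e.\ the defining equation~\eqref{eq:density}, and then apply the Choquet formula~\eqref{eq:Choquet}. Since $g \cdot \mu$ was introduced via~\eqref{eq:density} only under $\omega$-topological hypotheses, the one point that genuinely needs attention is that the right-hand side of~\eqref{eq:density}, and hence the statement, still makes sense over a bare Pervin space, where $\Lcont (X, \Latt)$ need not be closed under arbitrary products of functions.

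First I would check that $\chi_U \cdot g$ lies in $\Lcont (X, \Latt)$. Using the convention $0 \cdot \infty = 0$, one has $(\chi_U \cdot g) (x) = g (x)$ when $x \in U$ and $(\chi_U \cdot g) (x) = 0$ otherwise, so that for every $t \geq 0$,
\[
  (\chi_U \cdot g)^{-1} (]t, \infty]) = U \cap g^{-1} (]t, \infty]),
\]
which belongs to $\Latt$ because $U \in \Latt$, $g^{-1} (]t, \infty]) \in \Latt$ (as $g \in \Lcont (X, \Latt)$), and $\Latt$ is closed under finite intersections. This is exactly where the present situation is easier than in the proof of Lemma~\ref{lemma:gmu}: multiplying by a characteristic map only intersects with $U$, so no countable union is needed, and the hypothesis that $\Latt$ be an $\omega$-topology can be dropped. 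In particular $\int \chi_U \cdot g \;d\mu$ is a well-defined Choquet integral for any valuation $\mu$ on the Pervin space $(X, \Latt)$, so $(g \cdot \mu) (U)$ is meaningful in this generality via~\eqref{eq:density}.

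Then the computation is immediate: by~\eqref{eq:density}, the Choquet formula~\eqref{eq:Choquet} applied to $h = \chi_U \cdot g$ and to $\mu$, and the preimage identity above,
\[
  (g \cdot \mu) (U) = \int \chi_U \cdot g \;d\mu
  = \int_0^\infty \mu \bigl((\chi_U \cdot g)^{-1} (]t, \infty])\bigr) \;dt
  = \int_0^\infty \mu \bigl(U \cap g^{-1} (]t, \infty])\bigr) \;dt .
\]
The integrand $t \mapsto \mu (U \cap g^{-1} (]t, \infty]))$ is antitonic, so the Riemann integral on the right is well-defined, as required. I do not expect any real obstacle here; the content of the lemma is the bookkeeping observation that the Choquet description of $g \cdot \mu$ persists over an arbitrary Pervin space, which is what lets it be used in the later sections where $\nu = g \cdot \mu$ is studied without topological assumptions.
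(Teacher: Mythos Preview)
Your proof is correct and follows exactly the same route as the paper: unfold Definition~\ref{defn:gmu}, apply the Choquet formula~\eqref{eq:Choquet}, and use the preimage identity $(\chi_U\, g)^{-1}(]t,\infty]) = U \cap g^{-1}(]t,\infty])$. Your additional remark that this identity shows $\chi_U\, g \in \Lcont(X,\Latt)$ over a bare Pervin space (so that the definition of $g\cdot\mu$ extends beyond the $\omega$-topological setting of Definition~\ref{defn:gmu}) is a worthwhile clarification that the paper leaves implicit.
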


\begin{proof}
  Let $\nu \eqdef g \cdot \mu$.  For every $U \in \Latt$, we write
  $\nu (U) \eqdef \int \chi_U g \;d\mu$ as
  $\int_0^\infty \mu ((\chi_U g)^{-1} (]t, \infty])) \;dt$.  For every
  $t \in \Rp$,
  $(\chi_U g)^{-1} (]t, \infty]) = U \cap g^{-1} (]t, \infty])$,
  whence the result.
%
%
%
\end{proof}

\begin{proposition}
  \label{prop:abscont}
  Let $\mu$ and $\nu$ be two valuations on a Pervin space
  $(X, \Latt)$.  If $\nu = g \cdot \mu$ for some function
  $g \in \Lcont (X, \Latt)$,
  then $\nu$ is
  absolutely continuous with respect to $\mu$.
\end{proposition}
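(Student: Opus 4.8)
The plan is to base everything on the integral representation of $g \cdot \mu$ already established in Lemma~\ref{lemma:gmu:int}. Writing $\nu = g \cdot \mu$, that lemma gives, for every $U \in \Latt$,
\[
\nu(U) = \int_0^\infty \mu\bigl(U \cap g^{-1}(]t,\infty])\bigr)\,dt ,
\]
and for each fixed $U$ the integrand $t \mapsto \mu(U \cap g^{-1}(]t,\infty]))$ is antitonic, since $t \mapsto g^{-1}(]t,\infty])$ is antitonic and $\mu$ is monotonic; the integral is the usual improper Riemann integral, so the representation of $\int_0^\infty$ as a supremum of lower Darboux sums (recalled just before Fact~\ref{fact:R:cont}) applies.

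First I would fix $U_0 \in \Latt$ with $\nu(U_0) < \infty$ and $\epsilon \in \Rp \diff \{0\}$. Since $\int_0^\infty \mu(U_0 \cap g^{-1}(]t,\infty]))\,dt = \nu(U_0) < \infty$, and since this improper integral is the supremum of the monotone sequence of its partial integrals $\int_0^N \mu(U_0 \cap g^{-1}(]t,\infty]))\,dt$, $N \in \nat$, which are therefore all finite and converge up to $\nu(U_0)$, there is a $T > 0$ such that $\int_T^\infty \mu(U_0 \cap g^{-1}(]t,\infty]))\,dt < \epsilon/2$. I would then set $\eta \eqdef \epsilon/(2T) \in \Rp \diff \{0\}$.

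Finally, given $U \in \Latt$ with $U \subseteq U_0$ and $\mu(U) < \eta$, I would split the integral at $T$, so that $\nu(U) = \int_0^T \mu(U \cap g^{-1}(]t,\infty]))\,dt + \int_T^\infty \mu(U \cap g^{-1}(]t,\infty]))\,dt$. For the first term, monotonicity of $\mu$ gives $\mu(U \cap g^{-1}(]t,\infty])) \le \mu(U)$ for every $t$, hence that term is at most $T\,\mu(U) < T\eta = \epsilon/2$. For the second term, $U \subseteq U_0$ gives $\mu(U \cap g^{-1}(]t,\infty])) \le \mu(U_0 \cap g^{-1}(]t,\infty]))$ for every $t$, hence that term is at most $\int_T^\infty \mu(U_0 \cap g^{-1}(]t,\infty]))\,dt < \epsilon/2$. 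Adding up, $\nu(U) < \epsilon$, which is exactly the condition in Definition~\ref{defn:abs:cont}.

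I do not expect any real obstacle here: the argument is just monotonicity of $\mu$ plus the comparison and additivity properties of Riemann integrals. The only point needing a little care is the choice of the cut-off $T$, i.e.\ the fact that the tail $\int_T^\infty$ of a convergent improper Riemann integral of an antitonic function tends to $0$; this is handled by writing $\int_0^\infty$ as the (finite, by hypothesis) supremum of the partial integrals $\int_0^N$, so that the partial integrals converge to it and the tails vanish.
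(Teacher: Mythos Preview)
Your proof is correct and is essentially the same as the paper's: both use Lemma~\ref{lemma:gmu:int} to write $\nu(U)$ as an improper Riemann integral, choose a cut-off (the paper's $N$, your $T$) so that the tail $\int_N^\infty \mu(U_0 \cap g^{-1}(]t,\infty]))\,dt$ is below $\epsilon/2$, set $\eta = \epsilon/(2N)$, and then bound $\nu(U)$ by splitting at the cut-off and using monotonicity of $\mu$ on each piece. The only cosmetic difference is that the paper phrases the tail estimate via Fact~\ref{fact:R:cont} applied to the truncations $h_N$, whereas you phrase it as convergence of the partial integrals $\int_0^N$ to a finite limit.
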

\begin{proof}
  Let us fix $\epsilon \in \Rp \diff \{0\}$ and $U_0 \in \Latt$ such
  that $\nu (U_0) < \infty$.

  Let $h (t) \eqdef \mu (U_0 \cap g^{-1} (]t, \infty]))$, and
  $h_N (t)$ be defined as $h (t)$ if $t \leq N$, $0$ otherwise.  The
  maps $h_N$, $N \in \nat$, are antitonic, and their pointwise
  supremum is $h$.  Using Lemma~\ref{lemma:gmu:int}, with
  $U \eqdef U_0$, and Fact~\ref{fact:R:cont},
  $\nu (U_0) = \int_0^\infty h (t) \;dt = \dsup_{N \in \nat}
  \int_0^\infty h_N (t) \;dt$.  Since $\nu (U_0) < \infty$, for some
  $N \in \nat \diff \{0\}$,
  $\int_0^\infty h_N (t) \;dt > \nu (U_0) - \epsilon/2$.  Then
  $\int_N^\infty h (t) \;dt < \epsilon/2$.


  Let $\eta \eqdef \epsilon / (2N)$.  For every $U \in \Latt$ such
  that $U \subseteq U_0$ and $\mu (U) < \eta$, 
  we show that 
  $\nu (U) < \epsilon$ as follows.
  \begin{align*}
    \nu (U)
    & = (g \cdot \mu) (U)
    \\
    & = \int_0^\infty \mu (U \cap g^{-1} (]t, \infty])) \;dt
    \quad\text{by Lemma~\ref{lemma:gmu:int}} \\
    & = \int_0^N \mu (U \cap g^{-1} (]t, \infty])) \;dt + \int_N^\infty \mu
      (U \cap g^{-1} (]t, \infty])) \;dt \\
    & \leq N \mu (U) + \int_N^\infty h (t) \;dt
      < N \mu (U) + \epsilon/2 < N \eta + \epsilon/2 = \epsilon.  \qedhere
  \end{align*}
%
  %
\end{proof}

\subsection{Absolute continuity is not enough}
\label{sec:absol-cont-not}

Given a topological space $(X, \Latt)$, let $\Borel \Latt$ be its
Borel $\sigma$-algebra.  A \emph{Borel measure}, namely a measure on
$(X, \Borel\Latt)$, induces a valuation on $(X, \Latt)$ by restriction
to the open sets.  The Borel measures for which the induced valuation
is continuous are traditionally called \emph{$\tau$-smooth}.  By
Adamski's theorem \cite[Theorem~3.1]{Adamski:measures}, it is
equivalent to require all Borel measures on $(X, \Borel\Latt)$ to be
$\tau$-smooth, or to require $(X, \Latt)$ to be \emph{hereditarily
  Lindel\"of}; a space is hereditarily Lindel\"of if and only if every
family ${(U_i)}_{i \in I}$ of open subsets has a countable subfamily
with the same union.  All second-countable spaces are hereditarily
Lindel\"of.

There has been quite some literature on the converse question, among
which \cite{Lawson:valuation,alvarez-manilla00,KL:measureext}: given a
continuous valuation $\nu$ on $(X, \Latt)$, does $\nu$ extend to a
(necessarily $\tau$-smooth) Borel measure?  One of the most general
theorems of this kind is the following \cite[Theorem~1]{dBGLJL:LCS}:
every continuous valuation on an LCS-complete space extends to a Borel
measure; an \emph{LCS-complete} space is a homeomorph of a $G_\delta$
subset of a locally compact sober space.  The class of LCS-complete
spaces includes all locally compact sober spaces, Matthew de Brecht's
quasi-Polish spaces \cite{deBrecht:qPolish}, and therefore also all
Polish spaces.

Additionally, a standard use of the $\pi\lambda$-theorem
\cite[Theorem~3.2]{Billingsley:probmes} shows that any $\sigma$-finite
continuous valuation $\nu$ on $(X, \Latt)$ extends to a \emph{unique}
Borel measure.  That Borel measure $\mu$ is such that there exists a
monotone sequence ${(U_n)}_{n \in \nat}$ of open sets covering $X$ and
such that $\mu (U_n) < \infty$ for every $n \in \nat$.  This is a
stricter condition than simply being $\sigma$-finite, since $U_n$ is
required to be open, and Borel measures having this property are
sometimes called \emph{moderated}.

Since quasi-Polish spaces are second-countable
\cite[Definition~16]{deBrecht:qPolish}, hence hereditarily Lindel\"of,
it follows that $\sigma$-finite continuous valuations are in
one-to-one correspondence with moderated $\tau$-smooth measures on
quasi-Polish spaces.

We use this to transport the classical Radon-Nikod\'ym theorem over to
the world of continuous valuations.

In one direction, given any $\sigma$-finite continuous valuation $\mu$
on an LCS-complete space $(X, \Latt)$, let $\widetilde\mu$ be its
unique extension to a Borel measure. 
For every measurable map (not just any lower semicontinuous map) $g$,
in $\Lcont (X, \Borel\Latt)$, we can form the measure
$g \cdot \widetilde\mu$ on $(X, \Borel\Latt)$.  This induces an
$\omega$-continuous valuation by restriction to $\Latt$, which we
write as $g \cdot \mu$, extending Definition~\ref{defn:gmu} to a
larger class of density functions.  With this definition, we have the
following.

\begin{theorem}
  \label{thm:RN:val}
  For any two $\sigma$-finite continuous valuations on an LCS-complete
  space $(X, \Latt)$, the following are equivalent:
  \begin{enumerate}
  \item $\widetilde\nu$ is absolutely continuous with respect to
    $\widetilde\mu$;
  \item there is a measurable map $g \in \Lcont (X, \Borel\Latt)$ such
    that $\nu = g \cdot \mu$.
  \end{enumerate}
  Additionally, $g$ is unique up to $\widetilde\mu$-null sets.
\end{theorem}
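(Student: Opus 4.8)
The plan is to reduce both implications, and the uniqueness clause, to the classical Radon--Nikod\'ym theorem applied to the $\sigma$-finite Borel measures $\widetilde\mu$ and $\widetilde\nu$, using the $\pi\lambda$-theorem to pass between Borel measures on $(X, \Borel\Latt)$ and their restrictions to $\Latt$. The key bookkeeping remark is the following: \emph{if $\rho$ is any Borel measure on $(X, \Borel\Latt)$ whose restriction to $\Latt$ equals $\nu$, then $\rho = \widetilde\nu$.} Indeed, $\Latt$ is a $\pi$-system (a topology is closed under finite intersections) that generates $\Borel\Latt$, and since $\nu$ is a $\sigma$-finite continuous valuation we may fix a monotone sequence of \emph{open} sets $U_n$ covering $X$ with $\nu (U_n) < \infty$ (the moderated property recalled before the theorem); then $\rho (U_n) = \nu (U_n) = \widetilde\nu (U_n) < \infty$, so the $\pi\lambda$-theorem forces $\rho = \widetilde\nu$ on all of $\Borel\Latt$. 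I also record that $\Lcont (X, \Borel\Latt)$ is exactly the set of Borel-measurable maps $X \to \creal$, so there is never a measurability obstruction in what follows.

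For $(2) \Rightarrow (1)$: assuming $\nu = g \cdot \mu$ with $g \in \Lcont (X, \Borel\Latt)$, the construction of $g \cdot \mu$ given before the theorem says precisely that $g \cdot \mu$ is the restriction to $\Latt$ of the Borel measure $g \cdot \widetilde\mu$ on $(X, \Borel\Latt)$. So $g \cdot \widetilde\mu$ restricts on $\Latt$ to $\nu$, whence $g \cdot \widetilde\mu = \widetilde\nu$ by the bookkeeping remark. Applying Proposition~\ref{prop:abscont} with the Pervin space $(X, \Borel\Latt)$, the valuation $\widetilde\mu$, and the density $g$, we conclude that $\widetilde\nu = g \cdot \widetilde\mu$ is absolutely continuous with respect to $\widetilde\mu$. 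For $(1) \Rightarrow (2)$: since $\widetilde\nu$ and $\widetilde\mu$ are $\sigma$-finite Borel measures and $\widetilde\nu$ is absolutely continuous with respect to $\widetilde\mu$, the classical Radon--Nikod\'ym theorem produces a Borel-measurable $g \colon X \to \creal$ with $\widetilde\nu = g \cdot \widetilde\mu$; then $g \in \Lcont (X, \Borel\Latt)$, and restricting both sides to $\Latt$ gives $\nu = g \cdot \mu$. Finally, for uniqueness, if $\nu = g_1 \cdot \mu = g_2 \cdot \mu$ then, arguing as in $(2) \Rightarrow (1)$, $g_1 \cdot \widetilde\mu = \widetilde\nu = g_2 \cdot \widetilde\mu$, and the uniqueness part of the classical Radon--Nikod\'ym theorem, valid because $\widetilde\mu$ is $\sigma$-finite, gives $g_1 = g_2$ outside a $\widetilde\mu$-null set.

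The proof is thus essentially a matter of threading the definitions together: the only genuinely substantive ingredients are the classical Radon--Nikod\'ym theorem with its uniqueness clause and the $\pi\lambda$-uniqueness of the Borel extension, both already invoked in the text, together with the identification of the Choquet integral against a measure with the ordinary Lebesgue integral (so that $g \cdot \widetilde\mu$ of Definition~\ref{defn:gmu}, applied on the $\sigma$-algebra $\Borel\Latt$, agrees with the classical density measure). The point deserving the most care is verifying that the candidate $g \cdot \widetilde\mu$ is \emph{the} extension $\widetilde\nu$ rather than merely \emph{an} extension of $\nu$; this is exactly where finiteness of $g \cdot \widetilde\mu$ on the moderating open sets $U_n$ — immediate from $(g \cdot \widetilde\mu)(U_n) = \nu (U_n) < \infty$ — makes the $\pi\lambda$-theorem applicable.
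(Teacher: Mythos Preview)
Your argument is correct and follows the same route as the paper: reduce to the classical Radon--Nikod\'ym theorem for the Borel extensions $\widetilde\mu$, $\widetilde\nu$ via the equivalence $\nu = g \cdot \mu \Leftrightarrow \widetilde\nu = g \cdot \widetilde\mu$, which rests on the uniqueness of the $\sigma$-finite Borel extension. The paper compresses this into two lines, while you spell out the $\pi\lambda$ bookkeeping and invoke Proposition~\ref{prop:abscont} explicitly for the easy direction, but the substance is identical.
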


\begin{proof}
  The condition $\nu = g \cdot \mu$ is equivalent to
  $\widetilde\nu = g \cdot \widetilde\mu$, by our (re)definition of
  $g \cdot \mu$.  We conclude by invoking the classical
  Radon-Nikod\'ym theorem.
%
%
\end{proof}

Although this is a positive result, this gives us a recipe to show
that absolute continuity is \emph{not} enough for two $\sigma$-finite
$\omega$-continuous valuation to have a density
$g \in \Lcont (X, \Latt)$: find measurable maps that are equal to no
lower semicontinuous map up to a $\widetilde\mu$-null set.

We provide two counter-examples.  The first one relies on the
existence of non-trivial specialization orderings in non-$T_1$ spaces.
The second one takes place in $\real$ with its standard metric
topology.

\begin{example}
  \label{ex:hahn:needed:mono}
  Let $\mu \eqdef a \delta_x + b \delta_y$, where $a, b > 0$ and $x$
  and $y$ are two points of an LCS-complete space $(X, \Latt)$ with
  $x < y$.  (We let $x \leq y$ if and only if every $U \in \Latt$
  containing $x$ contains $y$; this is the \emph{specialization
    preordering} of $(X, \Latt)$.  A space is $T_0$ if and only if
  $\leq$ is antisymmetric, and every LCS-complete space is $T_0$.  We
  write $x < y$ if $x \leq y$ and $y \not\leq x$.)  Next, consider any
  $g \in \Lcont (X, \Borel\Latt)$ such that $g (x) > g (y)$.  For
  example, taking $g \eqdef 1-h$ fits, where $h$ is any lower
  semicontinuous map from $(X, \Latt)$ to $[0, 1] \subseteq \creal$
  such that $h (x) \neq h (y)$; indeed, every lower semicontinuous map
  is monotonic.  We note that $g$ is equal to no lower semicontinuous
  map up to any $\widetilde\mu$-null set, because $g$ is antitonic,
  lower semicontinuous maps are monotonic, and the
  $\widetilde\mu$-null sets are the Borel sets that contain neither
  $x$ nor $y$.  Therefore $g \cdot \mu$ has no lower semicontinuous
  density with respect to $\mu$.  For a concrete instance of this
  construction, consider Sierpi\'nski space for $(X, \Latt)$, namely
  $(\{0, 1\}, \{\emptyset, \{1\}, \{0, 1\}\})$, $x \eqdef 0$,
  $y \eqdef 1$, $g (0) \eqdef 1$, $g (1) \eqdef 0$.
\end{example}

\begin{example}
  \label{ex:hahn:needed:hausdorff}
  Let $\mu$ be the bounded discrete valuation
  $\delta_0 + \sum_{n \in \nat} \frac 1 {2^n} \delta_{1/2^n}$ on
  $\real$ with its standard topology. Let $g$ map every non-zero real
  number to $0$, and $0$ to $1$.  This is a measurable map.  The
  $\widetilde\mu$-null sets are the Borel sets that do not contain $0$
  or any point $1/2^n$, $n \in \nat$.  If $g$ were equal to some
  $h \in \Lcont (\real)$ up to some $\widetilde\mu$-null set, then we
  would have $h (0)=1$ and $h (1/2^n)=0$ for every $n \in \nat$.  But
  then $h^{-1} (]1/2, \infty])$ would contain $0$, hence $1/2^n$ for
  $n$ large enough, and that is impossible since $h (1/2^n)=0$.  It
  follows that $g \cdot \mu$ has no lower semicontinuous density with
  respect to $\mu$.
\end{example}

We will therefore look for additional conditions imposed by the
existence of $g \in \Lcont (X, \Latt)$ such that $\nu = g \cdot \mu$.

\subsection{The Smiley-Horn-Tarski theorem}
\label{sec:smiley-horn-tarski}

Let $\mathcal A (\Latt)$ be the smallest algebra of subsets of $X$
containing $\Latt$.  Its elements are the unions of finite collections
of pairwise disjoint crescents.  A \emph{crescent} is a difference
$U \diff V$ of elements $U, V \in \Latt$; we can assume
$V \subseteq U$ without loss of generality.

The \emph{Smiley-Horn-Tarski theorem}
\cite{smiley44,HornTarski48:ext,Pettis:ext} states that every bounded
valuation $\nu$ on $(X, \Latt)$ extends to a unique (bounded)
valuation on $(X, \mathcal A (\Latt))$.  In general, every valuation
$\nu$ on $(X, \Latt)$ (not necessarily bounded) extends to a valuation
on $(X, \mathcal A (\Latt))$, but that extension may fail to be unique
\cite[Proposition~IV-9.4]{GHKLMS:contlatt}.  We will usually write an
extension of $\nu$ on $(X, \mathcal A (\Latt))$ with the same letter
$\nu$, although one should be careful that such extensions may fail to
be unique when $\nu$ is not bounded.  Still, some uniqueness remains:
if $C \in \mathcal A (\Latt)$ can be written as a disjoint union of
crescents $U_i \diff V_i$ with $V_i \subseteq U_i$ ($1\leq i \leq n$),
and if $\nu (U_i) < \infty$ for every $i$, then necessarily
$\nu (C) = \sum_{i=1}^n (\nu (U_i) - \nu (V_i))$.

\begin{lemma}
  \label{lemma:gmu:int2}
  Let $(X, \Latt)$ be a Pervin space, $\mu$ be a bounded valuation on
  $(X, \Latt)$ and $g \in \Lcont (X, \Latt)$.  The function $\nu$ that
  maps every $C \in \mathcal A (\Latt)$ to
  $\int_0^\infty \mu (C \cap g^{-1} (]t, \infty])) \;dt$ is a
  valuation on $\mathcal A (\Latt)$ that extends $g \cdot \mu$ to
  $(X, \mathcal A (\Latt))$.
\end{lemma}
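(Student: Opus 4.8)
The plan is to check the three valuation axioms for $\nu$ and to verify that $\nu$ restricts to $g\cdot\mu$ on $\Latt$. The preparatory step that makes everything go through is to give $\mu$ a meaning on all of $\mathcal A(\Latt)$: since $\mu$ is \emph{bounded}, the Smiley-Horn-Tarski theorem extends it to a unique bounded valuation on $(X,\mathcal A(\Latt))$, which I still write $\mu$, and then $\mu(A)\le\mu(X)<\infty$ for every $A\in\mathcal A(\Latt)$. This is exactly where boundedness is used, and it is needed because for $C\in\mathcal A(\Latt)$ and $t\in\Rp$ one has $g^{-1}(]t,\infty])\in\Latt\subseteq\mathcal A(\Latt)$, hence $C\cap g^{-1}(]t,\infty])\in\mathcal A(\Latt)$, so the integrand $\mu(C\cap g^{-1}(]t,\infty]))$ in the formula for $\nu(C)$ lives in $\mathcal A(\Latt)$ and not merely in $\Latt$; uniqueness of the extension makes $\nu$ canonically defined.

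Granting this, $\nu$ is well defined: as $t$ grows, $g^{-1}(]t,\infty])$ shrinks, so $t\mapsto\mu(C\cap g^{-1}(]t,\infty]))$ is antitonic (and bounded by $\mu(X)$), and the improper Riemann integral of an antitonic map is a well-defined element of $\creal$, as noted just after \eqref{eq:Choquet}. The extension claim is then immediate: for $U\in\Latt$, Lemma~\ref{lemma:gmu:int} states precisely that $(g\cdot\mu)(U)=\int_0^\infty\mu(U\cap g^{-1}(]t,\infty]))\,dt=\nu(U)$. Strictness is clear, since $\mu(\emptyset\cap g^{-1}(]t,\infty]))=0$ for every $t$. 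Monotonicity follows because $C\subseteq C'$ gives $C\cap g^{-1}(]t,\infty])\subseteq C'\cap g^{-1}(]t,\infty])$ for every $t$, so $\mu(C\cap g^{-1}(]t,\infty]))\le\mu(C'\cap g^{-1}(]t,\infty]))$ holds pointwise, and this survives integration.

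For modularity I would fix $C,C'\in\mathcal A(\Latt)$ and set $A_t\eqdef C\cap g^{-1}(]t,\infty])$ and $B_t\eqdef C'\cap g^{-1}(]t,\infty])$. Intersecting with $g^{-1}(]t,\infty])$ commutes with finite unions and intersections, so $A_t\cup B_t=(C\cup C')\cap g^{-1}(]t,\infty])$ and $A_t\cap B_t=(C\cap C')\cap g^{-1}(]t,\infty])$, and modularity of $\mu$ on $\mathcal A(\Latt)$ yields $\mu(A_t)+\mu(B_t)=\mu(A_t\cup B_t)+\mu(A_t\cap B_t)$ for every $t\in\Rp$. It then remains to integrate this identity over $t\in[0,\infty)$ and invoke linearity of the improper Riemann integral of antitonic maps (the same linearity already used for Proposition~\ref{prop:Choquet:props}(i)), which gives $\nu(C)+\nu(C')=\nu(C\cup C')+\nu(C\cap C')$. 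To avoid appealing to linearity of a possibly infinite integral one can instead note that each integrand is antitonic and bounded by $\mu(X)$, apply ordinary linearity of the Riemann integral on each interval $[0,N]$, and then take the directed supremum over $N\in\nat$, which is legitimate since addition on $\creal$ is Scott-continuous (cf.\ Fact~\ref{fact:R:cont}). Either way, $\nu$ is a valuation on $\mathcal A(\Latt)$ extending $g\cdot\mu$.

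The argument is mostly bookkeeping; the two points that deserve care are the passage to the unique extension of $\mu$ to $\mathcal A(\Latt)$ — which is why $\mu$ is assumed bounded, and what makes the integrand, hence $\nu$, meaningful — and, in the modularity step, the additivity of the improper Riemann integral when some of the integrals may be infinite, which is handled by truncating to $[0,N]$ and letting $N\to\infty$.
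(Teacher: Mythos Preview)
Your proposal is correct and follows exactly the paper's approach: extend $\mu$ uniquely to $\mathcal A(\Latt)$ via Smiley--Horn--Tarski (using boundedness), then derive strictness, monotonicity, and modularity of $\nu$ pointwise from those of $\mu$ and integrate, and invoke Lemma~\ref{lemma:gmu:int} for the extension claim. You simply spell out more carefully what the paper leaves as one-line remarks, including the handling of possibly infinite Riemann integrals in the modularity step.
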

We call the valuation $\nu$ above the \emph{canonical extension} of
$g \cdot \mu$ to $(X, \mathcal A (\Latt))$.  There may be others:
while $\mu$ is bounded, $g \cdot \mu$ may fail to be.
\begin{proof}
  The definition of $\nu (C)$ makes sense, since the extension of
  $\mu$ to $\mathcal A (\Latt)$, which is required to make sense of
  $\mu (C \cap g^{-1} (]t, \infty]))$ is unique, owing to the fact
  that $\mu$ is bounded.  It is clear that $\nu (\emptyset)=0$.  The
  modularity and the monotonicity of $\nu$ on $\mathcal A (\Latt)$
  follow from the modularity and the monotonicity of $\mu$.  Hence
  $\nu$ is a valuation, and it extends $g \cdot \mu$ by
  Lemma~\ref{lemma:gmu:int}.
\end{proof}

Since extensions to $\mathcal A (\Latt)$ are unique for bounded
valuations, we obtain the following.
\begin{corollary}
  \label{corl:gmu:int2}
  Let $(X, \Latt)$ be a Pervin space, $\mu$ be a bounded valuation on
  $(X, \Latt)$ and $g \in \Lcont (X, \Latt)$.  If $\nu \eqdef g \cdot
  \mu$ is bounded, then its unique extension to $\mathcal A (\Latt)$
  is such that $\nu (C) = \int_0^\infty \mu (C \cap g^{-1} (]t,
  \infty])) \;dt$ for every $C \in \mathcal A (\Latt)$.
\end{corollary}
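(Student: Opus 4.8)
The plan is to combine two facts already established in the excerpt: that $\nu \eqdef g \cdot \mu$ is an $\omega$-continuous valuation on $(X,\Latt)$ (Proposition~\ref{prop:gmu}$(i)$), and that the function $C \mapsto \int_0^\infty \mu (C \cap g^{-1} (]t,\infty])) \;dt$ is \emph{a} valuation on $\mathcal A (\Latt)$ extending $g \cdot \mu$ (Lemma~\ref{lemma:gmu:int2}). The only genuinely new ingredient in Corollary~\ref{corl:gmu:int2} over Lemma~\ref{lemma:gmu:int2} is the word \emph{unique}, so the entire content of the argument is an appeal to the uniqueness clause of the Smiley--Horn--Tarski theorem, which applies precisely because the hypothesis now includes that $\nu = g \cdot \mu$ is bounded.

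Concretely, I would argue as follows. First, invoke Lemma~\ref{lemma:gmu:int2} to produce the valuation $\nu'$ on $\mathcal A (\Latt)$ defined by $\nu' (C) \eqdef \int_0^\infty \mu (C \cap g^{-1} (]t,\infty])) \;dt$; by that lemma, $\nu'$ restricts to $g \cdot \mu = \nu$ on $\Latt$. Second, observe that since $\nu$ is bounded on $(X,\Latt)$, the Smiley--Horn--Tarski theorem (cited in Section~\ref{sec:smiley-horn-tarski}) guarantees that $\nu$ has a \emph{unique} valuation extension to $(X, \mathcal A (\Latt))$. Since $\nu'$ is one such extension, it must coincide with the unique extension; hence the unique extension of $\nu$ to $\mathcal A (\Latt)$ satisfies $\nu (C) = \int_0^\infty \mu (C \cap g^{-1} (]t,\infty])) \;dt$ for every $C \in \mathcal A (\Latt)$, which is exactly the claim.

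I do not anticipate any real obstacle here: the work was already done in Lemma~\ref{lemma:gmu:int2}, and the present corollary is just the observation that, under the extra boundedness hypothesis on $\nu$, the formula-defined extension is forced to be \emph{the} extension rather than merely \emph{an} extension. The one point worth a sentence of care is that boundedness of $\mu$ alone (which is assumed) suffices to make $\mu (C \cap g^{-1}(]t,\infty]))$ well-defined via the unique extension of $\mu$, while boundedness of $\nu$ (newly assumed) is what delivers uniqueness of the extension of $\nu$; the remark immediately before the corollary already flags that these two boundedness conditions are logically independent, so the statement is not vacuous.
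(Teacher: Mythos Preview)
Your proposal is correct and matches the paper's own argument essentially verbatim: the paper simply notes, just before the corollary, that ``extensions to $\mathcal A(\Latt)$ are unique for bounded valuations,'' and lets Lemma~\ref{lemma:gmu:int2} do the rest. Your invocation of Proposition~\ref{prop:gmu}$(i)$ is harmless but unnecessary here, since $\omega$-continuity of $\nu$ plays no role in the corollary.
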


\subsection{Signed valuations}
\label{sec:signed-valuations}

In order to state the Hahn decomposition property, we need to
introduce signed valuations.

A \emph{signed valuation} is a map $\varsigma \colon \Latt \to \real$
(not $\Rp$) that is \emph{strict} ($\varsigma (\emptyset) = 0$) and
\emph{modular} (for all $U, V \in \Latt$,
$\varsigma (U \cup V) + \varsigma (U \cap V) = \varsigma (U) +
\varsigma (V)$).

Typical examples of signed valuations are given by maps
$\nu - r \cdot \mu \colon \Latt \to \real$, where $\nu$ and $\mu$ are
bounded valuations on $(X, \Latt)$, and every $r \in \Rp$.

We have the following analogue of the bounded form of the
Smiley-Horn-Tarski theorem.  The proof uses ingredients similar to
Proposition~\ref{prop:riesz}, and can also be used to derive the
classical Smiley-Horn-Tarski theorem.
\begin{proposition}
  \label{prop:pettis:signed}
  Let $\Latt$ be a lattice of subsets of a set $X$, and $\varsigma$ be
  a signed valuation on $(X, \Latt)$.  Then $\varsigma$ extends to a
  unique signed valuation on $(X, \mathcal A (\Latt))$.  The
  extension---still written $\varsigma$---satisfies
  $\varsigma (U \diff V) = \varsigma (U) - \varsigma (U \cap V) =
  \varsigma (U \cup V) - \varsigma (V)$ for all $U, V \in \Latt$.
\end{proposition}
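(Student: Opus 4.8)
The plan is to mimic the construction used for the Riesz-representation-style results (Propositions~\ref{prop:riesz} and \ref{prop:pettis:signed}-analogues), but now working with differences of valuations rather than linear functionals. The key observation is that every signed valuation can be written as a difference of two ordinary valuations, at least locally, which lets me reduce the extension problem to the bounded Smiley-Horn-Tarski theorem already quoted in Section~\ref{sec:smiley-horn-tarski}.

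First I would establish \textbf{existence} of the extension. The natural candidate is dictated by the required formula: for a crescent $U \diff V$ with $V \subseteq U$, set $\varsigma (U \diff V) \eqdef \varsigma (U) - \varsigma (U \cap V) = \varsigma (U) - \varsigma (V)$ (the two expressions agree since $V \subseteq U$), and for a general $C \in \mathcal A (\Latt)$ written as a disjoint union of crescents $\bigsqcup_{i=1}^n (U_i \diff V_i)$, set $\varsigma (C) \eqdef \sum_{i=1}^n (\varsigma (U_i) - \varsigma (V_i))$. The first task is well-definedness: one must check this does not depend on the chosen crescent decomposition. Here I would argue exactly as one does for the Smiley-Horn-Tarski theorem — any two decompositions of $C$ into disjoint crescents have a common refinement obtained by intersecting, and modularity (together with strictness) of $\varsigma$ guarantees that refining a crescent into finitely many disjoint subcrescents does not change the value of the sum. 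Since $\varsigma$ takes values in $\real$ (a group), there is no issue with $\infty - \infty$, which is precisely why the bounded case of Smiley-Horn-Tarski goes through and this one does too; the absence of boundedness hypotheses is harmless here.

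Next I would verify that the map so defined is a \textbf{signed valuation} on $\mathcal A (\Latt)$, i.e.\ strict and modular. Strictness is immediate. For modularity, given $C, C' \in \mathcal A (\Latt)$, I would pick a single crescent decomposition fine enough to express $C$, $C'$, $C \cup C'$ and $C \cap C'$ simultaneously (intersect the pieces of a decomposition of $C$ with those of a decomposition of $C'$, and with their complements within $C \cup C'$); then $\varsigma (C \cup C') + \varsigma (C \cap C')$ and $\varsigma (C) + \varsigma (C')$ both reduce to the same sum $\sum_j (\varsigma (A_j) - \varsigma (B_j))$ over the common refinement, because each crescent of $C \cup C'$ is counted once on the left-hand side and the overlapping pieces in $C \cap C'$ correct the double-counting on the right. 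This is the routine bookkeeping step.

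Then I would record the stated identity $\varsigma (U \diff V) = \varsigma (U) - \varsigma (U \cap V) = \varsigma (U \cup V) - \varsigma (V)$: the first equality is the definition applied to the crescent $U \diff V = U \diff (U \cap V)$, and the second follows from modularity of the extension applied to $U$ and $V$, namely $\varsigma (U) + \varsigma (V) = \varsigma (U \cup V) + \varsigma (U \cap V)$, rearranged. Finally, for \textbf{uniqueness}: any signed valuation on $(X, \mathcal A (\Latt))$ extending $\varsigma$ must satisfy the crescent formula just derived (since that derivation used only strictness and modularity on $\mathcal A (\Latt)$), and hence is forced on every element of $\mathcal A (\Latt)$ by additivity over disjoint crescents; so it coincides with the map we built. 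I expect the \textbf{main obstacle} to be purely organizational rather than conceptual — carefully setting up the common-refinement argument so that well-definedness and modularity are both dispatched cleanly — but since we are in an abelian group with no infinities to worry about, no genuine difficulty arises, and one may indeed read off the classical Smiley-Horn-Tarski theorem by applying this to $\varsigma$ a genuine bounded valuation and noting the extension stays non-negative.
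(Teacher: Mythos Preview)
Your argument is correct, but it is not the route the paper takes. You work combinatorially: define the extension on crescents by the forced formula, extend additively to disjoint unions, and handle well-definedness and modularity by passing to common refinements. The paper instead builds an additive functional $F$ on the abelian group of integer-valued step functions: every $h \in M^+$ has a \emph{unique} layer-cake decomposition $h = \sum_i \chi_{U_i}$ with $U_1 \supseteq U_2 \supseteq \cdots$, so $F(h) \eqdef \sum_i \varsigma(U_i)$ is automatically well-defined, and additivity is obtained once from a telescoping computation of $F(h + \chi_U)$. The extension is then $\varsigma^\%(C) \eqdef F(\chi_C)$, and modularity falls out instantly from $\chi_{C \cup C'} + \chi_{C \cap C'} = \chi_C + \chi_{C'}$. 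So the paper trades your refinement bookkeeping for a single telescoping identity; your version is more elementary and closer to the classical presentation of Smiley--Horn--Tarski, while the paper's functional detour makes well-definedness and modularity essentially free at the cost of that one computation.

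One small remark: your opening sentence announces a reduction to the bounded Smiley--Horn--Tarski theorem via writing $\varsigma$ as a difference of ordinary valuations, but you never actually carry that out (and a Jordan-type decomposition on a mere lattice is not obviously available); fortunately your proof does not rely on it, so this is only a mismatch between the stated plan and the executed argument.
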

\begin{proof}
  If $\varsigma^\%$ is any signed valuation extending $\varsigma$ on
  $(X, \mathcal A (\Latt))$, then it is defined uniquely on crescents
  by the fact that $\varsigma^\% (U \diff V)$ must be equal to
  $\varsigma (U) - \varsigma (U \cap V)$ and also to
  $\varsigma (U \cup V) - \varsigma (V)$ for all $U, V \in \Latt$, by
  modularity and the fact that
  $\varsigma^\% ((U \cap V) \cap (U \diff V))$ and
  $\varsigma^\% (V \cap (U \diff V))$ must both be equal to
  $\varsigma^\% (\emptyset)=0$; then $\varsigma^\%$ is uniquely
  determined on finite disjoint unions of crescents by additivity.

  We proceed as follows to prove the existence of $\varsigma^\%$.  Let
  $M^+$ be the set of functions $h \in \Lcont (X, \Latt)$ taking their
  values in $\nat$.  One can write any such $h$ as
  $\sum_{i=1}^\infty \chi_{U_i}$ in a unique way, where
  $U_1 \supseteq \cdots \supseteq U_n \supseteq \cdots$ form an
  antitone sequence of elements of $\Latt$, with $U_n = \emptyset$ for
  $n$ large enough.  Indeed, $U_i$ is determined uniquely as
  $h^{-1} ([i, \infty])$ (which is equal to
  $h^{-1} (]i-\epsilon, \infty])$ for any $\epsilon \in {]0,1[}$,
  hence is in $\Latt$).  On every such $h \in M^+$, let
  $F (h) \eqdef \sum_{i=1}^\infty \varsigma (U_i)$.  This is a finite
  sum, because $\varsigma$ is strict.

  With $h$ as above and $U \in \Latt$, $h+\chi_U$ is equal to
  $\sum_{i=1}^\infty \chi_{V_i}$ where $V_i = {(h+ \chi_U)}^{-1} ([i,
  \infty]) = h^{-1} ([i, \infty]) \cup (h^{-1} ([i-1, \infty]) \cap
  U) = U_i \cup (U_{i-1} \cap U)$; when $i=1$, we use the convention
  that $U_0=X$.  Hence:
  \begin{align*}
    F (h+\chi_U) & = \sum_{i=1}^\infty \varsigma (U_i \cup (U_{i-1}
                   \cap U)) \\
                 & = \sum_{i=1}^\infty \left(\varsigma (U_i) + \varsigma (U_{i-1} \cap U)
                   - \varsigma (U_i \cap U)\right) \\
                 & \qquad\text{by modularity; note that $U_i \cap
                   U_{i-1} \cap U$ simplifies to $U_i \cap U$}
    \\
                 & = F (h) + \varsigma (U),
  \end{align*}
  by canceling the telescoping terms $\varsigma (U_{i-1} \cap U)$ and
  $\varsigma (U_i \cap U)$, so that only
  $\varsigma (U_0 \cap U) = \varsigma (U)$ remains.
  
  For every $h' \in M^+$, written as $\sum_{j=1}^\infty \chi_{V_j}$
  where $V_1 \supseteq \cdots \supseteq V_n \supseteq \cdots$ form an
  antitone sequence of elements of $\Latt$, with $V_n = \emptyset$ for
  $n$ large enough, we obtain that $F (h+h') = F (h) + F (h')$ by
  induction on the number of non-empty sets $V_n$.

  We can therefore extend $F$ to an additive map from $M$ to $\Z$,
  where $M$ is the collection of differences $f-g$ of two elements of
  $M^+$, by $F (f-g) \eqdef F (f) - F (g)$.  This is unambiguous: if
  $f-g=f'-g'$, then $f+g' = f'+g$, so $F (f)+F(g')=F(f')+F(g)$, or
  equivalently $F (f)-F(g) = F(f')-F(g')$.

  Let us define $\varsigma^+ (C) \eqdef F (\chi_C)$ for every subset
  $C$ of $X$ such that $\chi_C \in M$.  Amongst those, we find the
  crescents $U \diff V$ (with $U, V \in \Latt$ and $V \subseteq U$),
  since $\chi_{U \diff V} = \chi_U - \chi_V$.  We also find the finite
  disjoint unions of crescents $C_1$, \ldots, $C_n$, since their
  characteristic map is $\sum_{i=1}^n \chi_{C_i}$.  Now $\varsigma^+$
  is strict since $F (0)=0$, and modular on $(X, \mathcal A (\Latt))$.
  The latter rests on the fact that for any sets $C$ and $C'$,
  $\chi_{C \cup C'} + \chi_{C \cap C'} = \chi_C + \chi_{C'}$: then
  $\varsigma (C \cup C') + \varsigma (C \cap C') = F (\chi_{C \cup C'}
  + \chi_{C \cap C'})$ (since $F$ is additive)
  $= F (\chi_C + \chi_{C'}) = \varsigma (C) + \varsigma (C')$ (since
  $F$ is additive, once again).
\end{proof}

\subsection{The Hahn decomposition property}
\label{sec:hahn-decomp-prop}

\begin{definition}[Hahn decomposition property]
  \label{defn:hahn}
  Let $(X, \Latt)$ be a Pervin space.  A signed valuation
  $\varsigma \colon \Latt \to \real$ has the \emph{Hahn decomposition
    property} if and only if there is an element $U$ of $\Latt$ such
  that:
  \begin{itemize}
  \item for every crescent $C$ included in $U$, $\varsigma (C) \geq 0$;
  \item for every crescent $C$ disjoint from $U$, $\varsigma (C) \leq 0$.
  \end{itemize}
\end{definition}
In this definition, we extend $\varsigma$ implicitly to a valuation on
$(X, \mathcal A (\Latt))$, using Proposition~\ref{prop:pettis:signed},
in order to make sense of $\varsigma (C)$.  We will call the set $U$
given above a \emph{witness} to the Hahn decomposition property.

\begin{proposition}
  \label{prop:hahn}
  Let $\mu$ and $\nu$ be two bounded valuations on a Pervin space
  $(X, \Latt)$.  If $\nu = g \cdot \mu$ for some
  $g \in \Lcont (X, \Latt)$, then for every $r \in \Rp$, the signed
  valuation $\nu - r \cdot \mu$ has the Hahn decomposition
  property---and one can take $U \eqdef g^{-1} (]r, \infty])$ as a
  witness to the latter.
\end{proposition}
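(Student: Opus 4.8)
The plan is to evaluate the extended signed valuation $\nu - r\cdot\mu$ on an arbitrary crescent $C$ and to read off both inequalities of Definition~\ref{defn:hahn} from one formula. First I would observe that, since $\mu$ and $\nu = g\cdot\mu$ are bounded, their extensions to $(X,\mathcal A(\Latt))$ are unique, and the unique extension of $\nu - r\cdot\mu$ supplied by Proposition~\ref{prop:pettis:signed} is simply the difference of these extensions; hence $(\nu - r\cdot\mu)(C) = \nu(C) - r\,\mu(C)$ for every $C \in \mathcal A(\Latt)$. By Corollary~\ref{corl:gmu:int2}, the extension of $\nu$ satisfies $\nu(C) = \int_0^\infty \mu(C \cap g^{-1}(]t,\infty]))\,dt$; and any crescent $C = U\diff V$ has $\nu(C) = \nu(U) - \nu(V) \le \nu(X) < \infty$, and likewise $\mu(C) < \infty$, so all the quantities below are finite reals and the arithmetic is unproblematic.

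Next I would write $r\,\mu(C) = \int_0^r \mu(C)\,dt$ and split the integral for $\nu(C)$ at $r$, obtaining
\[
  (\nu - r\cdot\mu)(C) = \int_0^r \bigl(\mu(C \cap g^{-1}(]t,\infty])) - \mu(C)\bigr)\,dt + \int_r^\infty \mu(C \cap g^{-1}(]t,\infty]))\,dt .
\]
Here $C \cap g^{-1}(]t,\infty])$ is again a crescent, equal to $(U\cap g^{-1}(]t,\infty]))\diff(V\cap g^{-1}(]t,\infty]))$, and it is included in $C$, so monotonicity of the extended $\mu$ makes the first integrand $\le 0$ and the second integrand $\ge 0$ for every $t$.

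Finally, taking $U \eqdef g^{-1}(]r,\infty])$ --- which lies in $\Latt$ because $g \in \Lcont(X,\Latt)$ and $r \in \Rp$ --- I would distinguish the two cases. If $C$ is a crescent included in $U$, then for $t \le r$ we have $g^{-1}(]r,\infty]) \subseteq g^{-1}(]t,\infty])$, so $C \cap g^{-1}(]t,\infty]) = C$ and the first integral vanishes, leaving $(\nu - r\cdot\mu)(C) = \int_r^\infty \mu(C \cap g^{-1}(]t,\infty]))\,dt \ge 0$. If $C$ is a crescent disjoint from $U$, then $g \le r$ on $C$, so for $t \ge r$ we have $C \cap g^{-1}(]t,\infty]) = \emptyset$ and the second integral vanishes, leaving $(\nu - r\cdot\mu)(C) = \int_0^r \bigl(\mu(C \cap g^{-1}(]t,\infty])) - \mu(C)\bigr)\,dt \le 0$. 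This verifies both clauses of Definition~\ref{defn:hahn} with witness $U = g^{-1}(]r,\infty])$.

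The computation itself is routine; the only point demanding a little care is the bookkeeping around extensions to $\mathcal A(\Latt)$ --- checking that ``$(\nu - r\cdot\mu)(C)$'' is well defined and equals $\nu(C) - r\,\mu(C)$ with $\nu$ and $\mu$ evaluated via the Choquet-type formula of Corollary~\ref{corl:gmu:int2} --- but boundedness of $\mu$ and $\nu$ forces all the extensions in sight to be unique, so this is not a genuine obstacle.
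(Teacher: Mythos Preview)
Your proof is correct and follows essentially the same route as the paper: both take the witness $U = g^{-1}(]r,\infty])$, invoke Corollary~\ref{corl:gmu:int2} to write $\nu(C)$ as $\int_0^\infty \mu(C\cap g^{-1}(]t,\infty]))\,dt$, split the integral at $r$, and use the inclusions $g^{-1}(]r,\infty])\subseteq g^{-1}(]t,\infty])$ for $t\le r$ (resp.\ $g^{-1}(]t,\infty])\subseteq g^{-1}(]r,\infty])$ for $t\ge r$) to handle the two cases. Your version is a touch more explicit about the bookkeeping around the extensions to $\mathcal A(\Latt)$, which does no harm.
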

\begin{proof}
  We take $U \eqdef g^{-1} (]r, \infty])$.  For every crescent
  $C \subseteq U$, $C \cap g^{-1} (]t, \infty]) = C$ for every
  $t \in [0, r]$, since $g^{-1} (]t, \infty])$ contains $U$ in that
  case.  Hence:
  \begin{align*}
    \nu (C) & = \int_0^\infty \mu (C \cap g^{-1} (]t, \infty])) \;dt
            \qquad\qquad \text{by Corollary~\ref{corl:gmu:int2}} \\
            & = \int_0^r  \mu (C \cap g^{-1} (]t, \infty])) \;dt + \int_r^\infty
              \mu (C \cap g^{-1} (]t, \infty])) \;dt \\
            & \geq \int_0^r  \mu (C \cap g^{-1} (]t, \infty])) \;dt
              = \int_0^r \mu (C) \;dt = r \cdot \mu (C).
  \end{align*}
  For every crescent $C$ disjoint from $U$,
  $C \cap g^{-1} (]t, \infty])$ is empty for every $t \geq r$, since
  $g^{-1} (]t, \infty])$ is included in $U$ in that case.  Hence:
  \begin{align*}
    \nu (C) & = \int_0^\infty \mu (C \cap g^{-1} (]t, \infty])) \;dt
    \\
    & = \int_0^r  \mu (C \cap g^{-1} (]t, \infty])) \;dt
      \leq \int_0^r \mu (C) \;dt = r \cdot \mu (C).
  \end{align*}
\end{proof}

Given any valuation $\nu$ on a Pervin space $(X, \Latt)$, and any
$U_0 \in \Latt$, we can define a valuation $\nu_{|U_0}$ by letting
$\nu_{|U_0} (U) \eqdef \nu (U \cap U_0)$ for every $U \in \Latt$; then
$\nu_{|U_0}$ is an $\omega$-continuous (resp., continuous) valuation
if $\nu$ is.  We also note that $\nu_{|U_0}$ is a bounded valuation if
and only if $\nu (U_0) < \infty$.  We use this in the proof of the
following corollary, and we will use the notion again later.
\begin{corollary}
  \label{corl:hahn}
  Let $\mu$ and $\nu$ be two valuations on a Pervin space
  $(X, \Latt)$.  If $\nu = g \cdot \mu$ for some
  $g \in \Lcont (X, \Latt)$, then for every $U_0 \in \Latt$ such that
  $\nu (U_0) < \infty$ and $\mu (U_0) < \infty$, for every
  $r \in \Rp$, the signed valuation $\nu_{|U_0} - r \cdot \mu_{|U_0}$
  has the Hahn decomposition property.
\end{corollary}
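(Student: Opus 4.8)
The plan is to deduce this from the bounded-valuation statement Proposition~\ref{prop:hahn} by showing that the density construction $g \cdot (-)$ commutes with restriction to $U_0$. First I observe that, because $\nu (U_0) < \infty$ and $\mu (U_0) < \infty$, the valuations $\nu_{|U_0}$ and $\mu_{|U_0}$ are \emph{bounded} (by the observations made just before the statement), so that $g \cdot \mu_{|U_0}$ makes sense exactly as $g \cdot \mu$ does in Proposition~\ref{prop:hahn}. It then suffices to prove that $\nu_{|U_0} = g \cdot \mu_{|U_0}$: Proposition~\ref{prop:hahn}, applied to the bounded valuations $\mu_{|U_0}$ and $\nu_{|U_0}$, immediately gives that $\nu_{|U_0} - r \cdot \mu_{|U_0}$ has the Hahn decomposition property for every $r \in \Rp$, with $g^{-1} (]r, \infty])$ as a witness.

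The key identity $\nu_{|U_0} = g \cdot \mu_{|U_0}$ I would check on an arbitrary $U \in \Latt$ using Lemma~\ref{lemma:gmu:int} on both sides. On the left,
\[
\nu_{|U_0} (U) = \nu (U \cap U_0) = (g \cdot \mu) (U \cap U_0) = \int_0^\infty \mu ((U \cap U_0) \cap g^{-1} (]t, \infty])) \;dt .
\]
On the right,
\[
(g \cdot \mu_{|U_0}) (U) = \int_0^\infty \mu_{|U_0} (U \cap g^{-1} (]t, \infty])) \;dt = \int_0^\infty \mu ((U \cap g^{-1} (]t, \infty])) \cap U_0) \;dt .
\]
Since the two integrands agree for every $t$ by commutativity and associativity of intersection, the two sides coincide. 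This is exactly the expected fact that $(-)_{|U_0}$ and $g \cdot (-)$ commute.

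I do not foresee a genuine obstacle; the only care needed is the bookkeeping that lets us invoke Proposition~\ref{prop:hahn} (and, through it, Corollary~\ref{corl:gmu:int2}): restriction to $U_0$ preserves boundedness (and preserves $\omega$-continuity, resp.\ continuity, as already noted), and the identity $\nu_{|U_0} = g \cdot \mu_{|U_0}$, established on $\Latt$, propagates to $\mathcal A (\Latt)$ by uniqueness of extensions of bounded valuations, so that the Hahn decomposition property is really being checked for the correct signed valuation.
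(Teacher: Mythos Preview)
Your proposal is correct and follows essentially the same approach as the paper: establish $\nu_{|U_0} = g \cdot \mu_{|U_0}$ via Lemma~\ref{lemma:gmu:int} (using that the integrands coincide pointwise), then invoke Proposition~\ref{prop:hahn} on the bounded pair $\mu_{|U_0}$, $\nu_{|U_0}$. Your additional remark about uniqueness of extensions to $\mathcal A(\Latt)$ is a helpful sanity check, though the paper leaves this implicit since it is already absorbed into the statement and proof of Proposition~\ref{prop:hahn}.
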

\begin{proof}
  If $\nu (U_0) < \infty$ and $\mu (U_0) < \infty$, then
  $\nu_{|U_0} = g \cdot \mu_{|U_0}$, since for every $U \in \Latt$,
  $\nu_{|U_0} (U) = \nu (U \cap U_0) = \int_0^\infty \mu (U \cap U_0
  \cap g^{-1} (]t, \infty])) \;dt$ (by Lemma~\ref{lemma:gmu:int})
  $= \int_0^\infty \mu_{|U_0} (U \cap g^{-1} (]t, \infty])) \;dt = (g
  \cdot \mu_{|U_0}) (U)$.  We conclude by using
  Proposition~\ref{prop:hahn}.
\end{proof}

\section{The existence of density maps}
\label{sec:exist-dens-maps}

We now show that absolute continuity and the Hahn decomposition
property suffice to guarantee the existence of a density function.
The following are the two key lemmata.  We write $\D$ for the set of
dyadic numbers, namely rational numbers of the form $p/2^n$ with
$p \in \Z$ and $n \in \nat$.  We also use the Smiley-Horn-Tarski
theorem in order to make sense of $\nu (C)$ below, and the canonical
extension given in Lemma~\ref{lemma:gmu:int2} in order to make sense
of $(g \cdot \mu) (C)$.
\begin{lemma}
  \label{lemma:gmu:below}
  Let $(X, \Latt)$ be a Pervin space, $g \in \Lcont (X, \Latt)$, and
  $\nu$, $\mu$ be two bounded valuations on $(X, \Latt)$.  Let us
  assume that for every non-negative dyadic number
  $r \in \D \cap \Rp$, for every crescent
  $C \subseteq g^{-1} (]r, \infty])$, $\nu (C) \geq r \cdot \mu (C)$.
  Then for every $C \in \mathcal A (\Latt)$,
  $\nu (C) \geq (g \cdot \mu) (C)$. In particular,
  $\nu \geq g \cdot \mu$ on $(X, \Latt)$.
\end{lemma}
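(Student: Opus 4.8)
The plan is to reduce the statement to the lower-Darboux-sum description of the Choquet integral. Since $\mu$ is bounded, Lemma~\ref{lemma:gmu:int2} gives the canonical extension of $g\cdot\mu$ to $(X,\mathcal A(\Latt))$, namely $(g\cdot\mu)(C)=\int_0^\infty\mu(C\cap g^{-1}(]t,\infty]))\,dt$; and $\nu$, being bounded, extends uniquely to $\mathcal A(\Latt)$ by the Smiley-Horn-Tarski theorem, and is nonnegative on every crescent (by monotonicity: $\nu(U\diff V)=\nu(U)-\nu(V)$), hence on all of $\mathcal A(\Latt)$. The integrand $t\mapsto\mu(C\cap g^{-1}(]t,\infty]))$ is antitonic, so by the lower Darboux sum characterization of improper Riemann integrals recalled just before Fact~\ref{fact:R:cont}, $(g\cdot\mu)(C)$ is the supremum over $N\in\nat$ of $S_N\eqdef\frac1{2^N}\sum_{k=1}^{N2^N}\mu(C\cap g^{-1}(]k/2^N,\infty]))$, the lower Darboux sum of that integrand over the uniform dyadic partition of $[0,N]$. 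It therefore suffices to prove $S_N\le\nu(C)$ for every fixed $N$ and every $C\in\mathcal A(\Latt)$.

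First I would upgrade the hypothesis from crescents to arbitrary members of the algebra: if $C'\in\mathcal A(\Latt)$ satisfies $C'\subseteq g^{-1}(]r,\infty])$ with $r\in\D\cap\Rp$, then writing $C'$ as a finite disjoint union of crescents---each of which is again contained in $g^{-1}(]r,\infty])$---and summing the inequalities $\nu(\cdot)\ge r\cdot\mu(\cdot)$ yields $\nu(C')\ge r\cdot\mu(C')$.

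The heart of the argument is a layer-cake decomposition. Fix $N$ and $C$, and for $1\le k\le N2^N$ put $A_k\eqdef C\cap g^{-1}(]k/2^N,\infty])\in\mathcal A(\Latt)$; these form a decreasing chain, so setting $B_k\eqdef A_k\diff A_{k+1}$ for $1\le k<N2^N$, $B_{N2^N}\eqdef A_{N2^N}$ and $B_0\eqdef C\diff A_1$ gives a partition $C=B_0\sqcup B_1\sqcup\cdots\sqcup B_{N2^N}$ into elements of $\mathcal A(\Latt)$, with $A_k=B_k\sqcup\cdots\sqcup B_{N2^N}$. Since $B_k\subseteq A_k\subseteq g^{-1}(]k/2^N,\infty])$ and $k/2^N\in\D\cap\Rp$, the upgraded hypothesis gives $\nu(B_k)\ge(k/2^N)\mu(B_k)$ for $k\ge1$, while $\nu(B_0)\ge0$. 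Using the additivity of the extensions of $\nu$ and $\mu$ on the algebra, $\nu(C)=\sum_{k=0}^{N2^N}\nu(B_k)\ge\sum_{k=1}^{N2^N}(k/2^N)\mu(B_k)$; on the other hand, substituting $\mu(A_k)=\sum_{j=k}^{N2^N}\mu(B_j)$ into $S_N=\frac1{2^N}\sum_{k=1}^{N2^N}\mu(A_k)$ and exchanging the two finite sums gives $S_N=\sum_{j=1}^{N2^N}(j/2^N)\mu(B_j)$. Hence $S_N\le\nu(C)$, and taking the supremum over $N$ yields $\nu(C)\ge(g\cdot\mu)(C)$ for every $C\in\mathcal A(\Latt)$. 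Specializing to $C=U\in\Latt$, and recalling from Lemma~\ref{lemma:gmu:int2} that the canonical extension agrees with $g\cdot\mu$ on $\Latt$, gives the final assertion $\nu\ge g\cdot\mu$ on $(X,\Latt)$.

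I do not anticipate a real obstacle: once one observes that the $B_k$ lie in $\mathcal A(\Latt)$ and that the crescent hypothesis transfers to them---which is exactly why the first move is to extend the hypothesis from crescents to the whole algebra---the rest is routine bookkeeping. The only mild subtlety is correctly handling the boundary pieces $B_0$ and $B_{N2^N}$ and the fact that at stage $N$ one integrates only up to $N$.
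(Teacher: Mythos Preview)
Your proof is correct, and the core idea---rewriting a Darboux sum for $\int_0^\infty \mu(C\cap g^{-1}(]t,\infty]))\,dt$ as a weighted sum over the level ``slices'' $B_k$ and then applying the hypothesis slice by slice---is exactly the horizontal/vertical reorganisation that the paper uses.  The difference is in implementation: the paper bounds $(g\cdot\mu)(C)$ from above by an \emph{upper} Darboux sum $\sum_{k=1}^{N2^N}\frac{1}{2^N}\mu(C\cap g^{-1}(](k-1)/2^N,\infty]))$, so that each slice sits inside $g^{-1}(](k-1)/2^N,\infty])$ while carrying the coefficient $k/2^N$; this mismatch forces the extra factors $\frac{k}{k-1}\leq\frac{N+1}{N}$, the crude bound $\frac{N(N+1)}{2^{N+1}}\mu(C)$ on the small-$k$ terms, and a final passage $N\to\infty$ to kill the errors.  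By contrast, you use the \emph{lower} Darboux sums $S_N$---which the paper itself notes realise the integral as a supremum---so that each $B_k$ lies in $g^{-1}(]k/2^N,\infty])$ with coefficient exactly $k/2^N$, and the inequality $S_N\leq\nu(C)$ falls out with no error terms.  Your preliminary step of lifting the crescent hypothesis to all of $\mathcal A(\Latt)$ is what makes this work for general $C$, whereas the paper reduces to crescents first; either order is fine.  In short: same strategy, but your choice of lower sums yields a tidier bookkeeping.
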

\begin{proof}
    \begin{figure}
    \centering
    \includegraphics[scale=0.45]{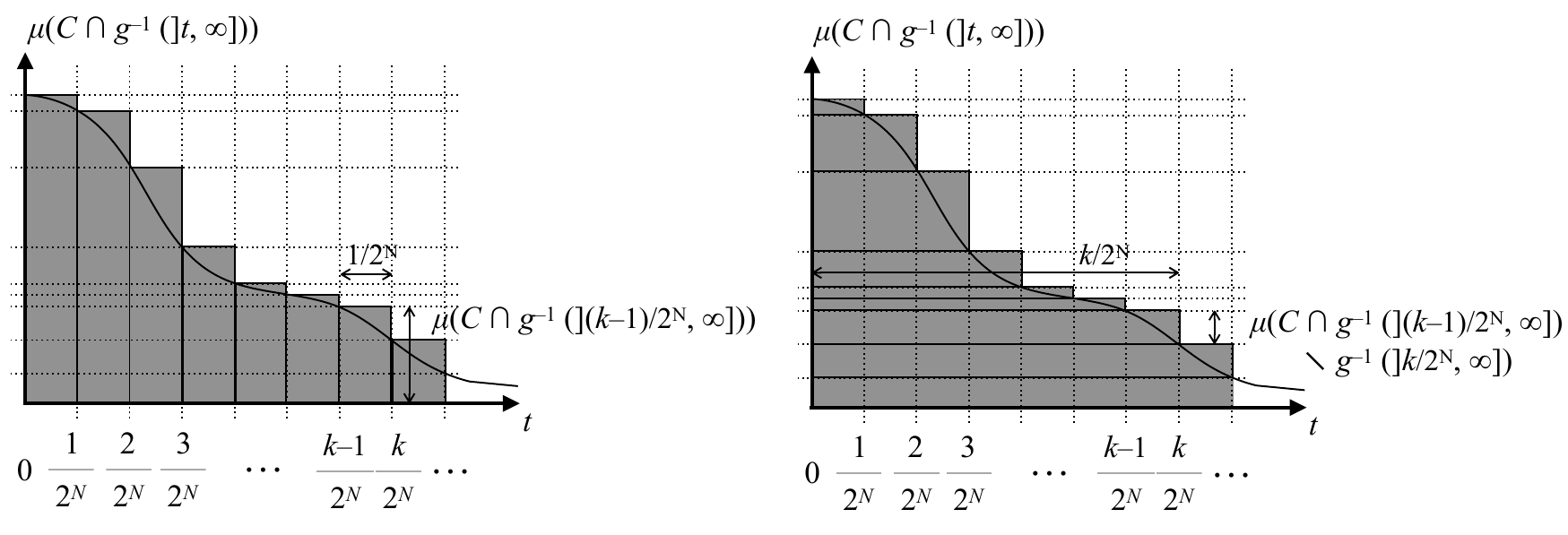}
    \caption{Bounding $(g \cdot \mu) (C)$ from above}
    \label{fig:radon-nikodym-upper}
  \end{figure}
  It suffices to show the claim for every crescent $C$.  Once this is
  done, the claim that $\nu (C) \geq (g \cdot \mu) (C)$ for every $C
  \in \mathcal A (\Latt)$ follows from the fact that $C$ is a disjoint union
  of crescents, and that $\nu$ and $g \cdot \mu$ are additive.

  We fix a crescent $C$. By definition of canonical extensions
  (Lemma~\ref{lemma:gmu:int2}),
  $(g \cdot \mu) (C) = \int_0^\infty \mu (C \cap g^{-1} (]t, \infty]))
  \;dt$.

  The main ingredient of the proof is summarized in
  Figure~\ref{fig:radon-nikodym-upper}: the sum of the areas of the
  vertical bands on the left is equal to the sum of the areas of the
  horizontal bands on the right.  We will rely on that figure in what
  follows.

  Let $f (t) \eqdef \mu (C \cap g^{-1} (]t, \infty]))$, and
  $f_N (t) \eqdef f (t)$ if $t \leq N$, $0$ otherwise.  In the figure,
  $f$ is shown as the solid decreasing curve, both on the left-hand
  side and on the right-hand side.  Since $f$ is the pointwise
  supremum of ${(f_N)}_{N \in \nat}$,
  $(g \cdot \mu) (C) = \dsup_{N \in \nat} \int_0^\infty f_N (t) \;dt$
  by Fact~\ref{fact:R:cont}.

  We fix an arbitrary $r \in \Rp$ such that $r < (g \cdot \mu) (C)$.
  For $N \in \nat$ large enough,
  $r \leq \int_0^\infty f_N (t) \;dt = \int_0^N \mu (C \cap g^{-1}
  (]t, \infty])) \;dt \leq \sum_{k=1}^{N2^N} \frac 1 {2^N} \mu (C \cap
  g^{-1} (](k-1)/2^N, \infty])$.  The latter is the sum of the areas
  of the vertical bands on the left of
  Figure~\ref{fig:radon-nikodym-upper}.
  
  Reorganizing the summation, that is also equal to the sum of the
  areas of the horizontal bands on the right, so:
  \begin{align*}
    r & \leq \sum_{k=1}^{N2^N} \frac k {2^N} \mu (C \cap g^{-1} (](k-1)/2^N,
        \infty]) \diff g^{-1} (]k/2^N, \infty]))\\
      & \qquad\qquad + N \mu (C \cap g^{-1} (]N, \infty])).
  \end{align*}
  The final term in the sum is the area of the bottommost band.  The
  sum of the terms with $1\leq k\leq N$ is bounded from above by
  $\sum_{k=1}^N \frac k {2^N} \mu (C) \leq \frac {N(N+1)} {2^{N+1}}
  \mu (C)$.  For every $k$ between $N+1$ and $N2^N$, the crescent
  $C' \eqdef C \cap g^{-1} (](k-1)/2^N, \infty]) \diff g^{-1} (]k/2^N,
  \infty])$ is included in $g^{-1} (](k-1)/2^N, \infty])$, so
  $\nu (C') \geq (k-1)/2^N \;\mu (C')$ by assumption.  Similarly,
  $\nu (C \cap g^{-1} (]N, \infty])) \geq N \; \mu (C \cap g^{-1} (]N,
  \infty]))$.

  It follows that:
  \begin{align*}
    r & \leq \frac {N(N+1)} {2^{N+1}} \mu (C) + \sum_{k=N+1}^{N2^N} \frac
        k {k-1} \nu (C \cap g^{-1} (](k-1)/2^N, \infty]) \diff g^{-1}
        (]k/2^N, \infty])) \\
    & \qquad\qquad + \nu (C \cap g^{-1} (]N, \infty])).
  \end{align*}
  In the
  middle sum, $k/(k-1)$ is smaller than or equal to $(N+1)/N$.  We
  also have
  $\nu (C \cap g^{-1} (]N, \infty])) \leq \frac {N+1} N \nu (C \cap
  g^{-1} (]N, \infty]))$, because $\frac {N+1} N \geq 1$.  Hence:
  \begin{align*}
    r & \leq \frac {N(N+1)} {2^{N+1}} \mu (C) + \frac {N+1} N
        \sum_{k=N+1}^{N2^N} \nu (C \cap g^{-1} (](k-1)/2^N, \infty]) \diff
        g^{-1} (]k/2^N, \infty])) \\
    & \qquad\qquad + \frac {N+1} N \nu (C \cap g^{-1} (]N, \infty]))
  \end{align*}
  By the additivity of $\nu$, the right-hand side is equal to
  $\frac {N(N+1)} {2^{N+1}} \mu (C) + \frac {N+1} N \nu (C \cap g^{-1}
  (]N/2^N, \infty]))$.  Since $C \cap g^{-1} (]N/2^N, \infty])$ is
  included in $C$, and $\nu$ is monotonic,
  $r \leq \frac {N(N+1)} {2^{N+1}} \mu (C) + \frac {N+1} N \nu (C)$.
  We let $N$ tend to $\infty$, and we obtain that $r \leq \nu (C)$.
  Taking suprema over all $r < (g \cdot \mu) (C)$,
  $(g \cdot \mu) (C) \leq \nu (C)$.
\end{proof}

We have a somewhat symmetric situation in the following lemma, except
that we cannot conclude that $(g \cdot \mu) (C) \geq \nu (C)$ without
further assumptions.  Once again, we use the canonical extension of $g
\cdot \mu$ to make sense of $(g \cdot \mu) (C)$.
\begin{lemma}
  \label{lemma:gmu:above}
  Let $(X, \Latt)$ be a Pervin space, $g \in \Lcont (X, \Latt)$, and
  $\nu$, $\mu$ be two bounded valuations on $(X, \Latt)$.  Let us
  assume that for every non-negative dyadic number
  $r \in \D \cap \Rp$, for every crescent $C$ disjoint from
  $g^{-1} (]r, \infty])$, $\nu (C) \leq r \cdot \mu (C)$.  Then there
  is an directed countable family ${(U_N)}_{N \in \nat}$ of elements
  of $\Latt$ with the following properties:
  \begin{enumerate}[label=(\roman*)]
  \item $\nu (X \diff \dcup_{N \in \nat} U_N)=0$;
  \item for every $C \in \mathcal A (\Latt)$, for every $N \in \nat$,
    $(g \cdot \mu) (C) \geq \frac N {N+1} \nu (C \cap U_N) + N (\mu (C
    \cap U_N) - \frac 1 {N+1} \nu (C \cap U_N))$;
  \end{enumerate}
\end{lemma}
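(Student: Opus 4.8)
The plan is to take $U_N \eqdef g^{-1}(]N, \infty])$ for each $N \in \nat$. Since $]N, \infty]$ is Scott-open and $g \in \Lcont (X, \Latt)$, every $U_N$ lies in $\Latt$, and the $U_N$ form a decreasing, hence directed, chain with $\dcup_{N \in \nat} U_N = U_0 = g^{-1}(]0, \infty])$. For~(ii), fix $C \in \mathcal A (\Latt)$ and $N \in \nat$. By the definition of the canonical extension (Lemma~\ref{lemma:gmu:int2}), $(g \cdot \mu)(C) = \int_0^\infty f(t)\,dt$, where $f(t) \eqdef \mu(C \cap g^{-1}(]t, \infty]))$; the map $f$ is antitonic, because $g^{-1}(]t, \infty])$ decreases with $t$ and $\mu$ (extended to $\mathcal A (\Latt)$) is monotonic. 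As $f \geq 0$ we have $\int_0^\infty f \geq \int_0^N f$, and as $f(t) \geq f(N)$ throughout $[0, N]$ we have $\int_0^N f \geq N f(N) = N\,\mu(C \cap U_N)$; hence $(g \cdot \mu)(C) \geq N\,\mu(C \cap U_N)$. Writing $N\,\mu(C \cap U_N) = \frac{N}{N+1}\nu(C \cap U_N) + N\bigl(\mu(C \cap U_N) - \frac{1}{N+1}\nu(C \cap U_N)\bigr)$ --- where the two $\nu$-terms cancel --- is precisely the inequality of~(ii). (Alternatively, one may imitate the proof of Lemma~\ref{lemma:gmu:below}: bound $\int_0^N f$ below by a lower Darboux sum, reorganise it into horizontal bands indexed by the crescents $C \cap (g^{-1}(](k-1)/2^N, \infty]) \diff g^{-1}(]k/2^N, \infty]))$, apply the hypothesis to each such crescent --- which is disjoint from $g^{-1}(]k/2^N, \infty])$ --- and finish with the monotonicity of $\nu$. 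The short argument already gives~(ii), and shows that the hypothesis is really needed only for~(i).)

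For~(i), we have $X \diff \dcup_{N \in \nat} U_N = X \diff g^{-1}(]0, \infty]) = \bigcap_{n \geq 1} D_n$, where $D_n \eqdef X \diff g^{-1}(]2^{-n}, \infty])$. Each $D_n$ is a crescent (both $X$ and $g^{-1}(]2^{-n}, \infty])$ lie in $\Latt$) disjoint from $g^{-1}(]2^{-n}, \infty])$, so the hypothesis, applied with the non-negative dyadic number $r \eqdef 2^{-n}$, gives $\nu(D_n) \leq 2^{-n}\mu(D_n) \leq 2^{-n}\mu(X)$. The $D_n$ increase with $n$, so $\nu(D_n)$ is non-decreasing; comparing $\nu(D_n)$ with $\nu(D_m) \leq 2^{-m}\mu(X)$ for $m \geq n$ and letting $m \to \infty$ (valid because $\mu$ is bounded) forces $\nu(D_n) = 0$ for every $n \geq 1$. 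Since $X \diff \dcup_{N \in \nat} U_N$ is contained in every $D_n$, it is $\nu$-negligible, which is the content of~(i); and when $g^{-1}(]0, \infty]) \in \mathcal A (\Latt)$ --- for instance when $\Latt$ is an $\omega$-topology --- the set $X \diff \dcup_{N \in \nat} U_N$ then belongs to $\mathcal A (\Latt)$ and, being contained in $D_1$, satisfies $\nu(X \diff \dcup_{N \in \nat} U_N) \leq \nu(D_1) = 0$ outright.

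The one real idea is the choice $U_N = g^{-1}(]N, \infty])$; once it is made, (ii) drops straight out of the Choquet formula and the antitonicity of $t \mapsto \mu(C \cap g^{-1}(]t, \infty]))$, and (i) out of the hypothesis applied to the crescents $D_n$. The only point requiring care is that $X \diff \dcup_{N \in \nat} U_N$ need not lie in $\mathcal A (\Latt)$ over a general Pervin space, so that ``$\nu(X \diff \dcup_{N \in \nat} U_N) = 0$'' must be read as ``this set is contained in crescents of $\nu$-measure zero'' --- which is exactly what the squeeze through the $D_n$ provides. That, and not any computation, is the delicate part.
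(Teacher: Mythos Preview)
Your proof is correct and considerably more direct than the paper's.  The central observation---that the two $\nu$-terms on the right of~(ii) cancel, so~(ii) reads simply $(g\cdot\mu)(C)\geq N\,\mu(C\cap U_N)$---reduces~(ii) to the one-line estimate $\int_0^\infty f\geq\int_0^N f\geq N f(N)$ with $f(t)=\mu(C\cap g^{-1}(]t,\infty]))$.  The paper instead takes $U_N\eqdef g^{-1}(]N/2^N,\infty])$ and argues via a lower Darboux sum rearranged into horizontal bands, invoking the hypothesis on $\nu$ throughout; your argument shows the hypothesis is really needed only for~(i).

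Two remarks.  First, your closing worry is unfounded: by the definition of $\Lcont(X,\Latt)$ one has $g^{-1}(]t,\infty])\in\Latt$ for \emph{every} $t\in\Rp$, in particular $t=0$, so $X\diff\dcup_{N}U_N=X\diff g^{-1}(]0,\infty])$ is an honest crescent and your squeeze through the $D_n$ gives~(i) outright---no hedged reading is needed, even over a general Pervin space.  Second, the paper's more elaborate choice of $U_N$ is not idle.  Because $N/2^N\to 0$, the paper's $U_N$ eventually fill up $g^{-1}(]0,\infty])$ for large $N$, and this is what drives the subsequent Lemma~\ref{lemma:gmu:above:cont}: one needs $\nu(C\cap U_N)$ close to $\nu(C)$ \emph{and} $\tfrac{N}{N+1}$ close to~$1$ for the same large $N$.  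Your decreasing chain $U_N=g^{-1}(]N,\infty])$ shrinks as $N$ grows---take any $g$ bounded by~$1$, so $U_N=\emptyset$ for $N\geq 1$---and then $\sup_N\tfrac{N}{N+1}\nu(C\cap U_N)=0$.  So while your family satisfies~(i) and~(ii) exactly as stated, it will not feed into Corollary~\ref{corl:gmu:above} via Lemma~\ref{lemma:gmu:above:cont}; the paper's longer argument is implicitly doing work that the statement of this lemma alone does not capture.
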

\begin{proof}
  \begin{figure}
    \centering
    \includegraphics[scale=0.45]{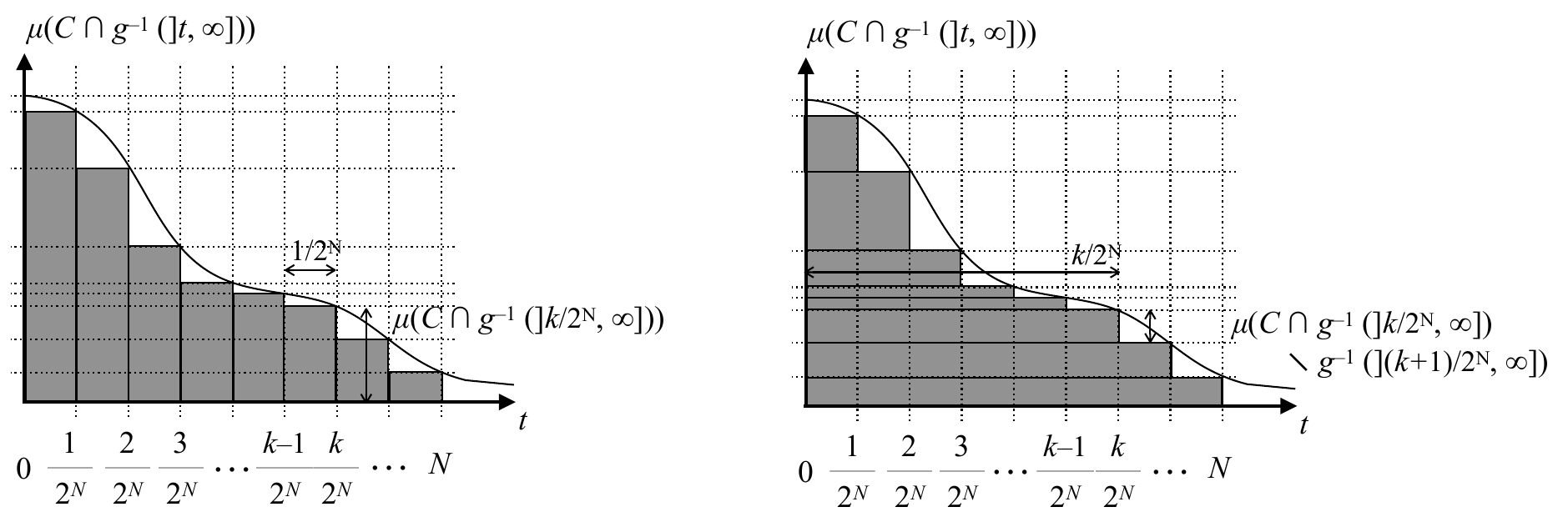}
    \caption{Bounding $(g \cdot \mu) (C)$ from below}
    \label{fig:radon-nikodym-lower}
  \end{figure}
  By definition of canonical extensions (Lemma~\ref{lemma:gmu:int2}),
  $(g \cdot \mu) (C) = \int_0^\infty \mu (C \cap g^{-1} (]t, \infty]))
  \;dt \geq \sum_{k=1}^{N2^N} \frac 1 {2^N} \mu (C \cap g^{-1}
  (]k/2^N, \infty])) \;dt$.  The latter is the area of the vertical
  bands on the left of Figure~\ref{fig:radon-nikodym-lower}, which
  rewrites as the area of the horizontal bands on the right, namely:
  \begin{align*}
    & \sum_{k=1}^{N2^N-1} \frac k {2^N} \mu (C \cap g^{-1} (]k/2^N,
      \infty]) \diff g^{-1} (](k+1)/2^N, \infty])) \\
    & \qquad\qquad + N \mu (C \cap g^{-1} (]N, \infty])).
  \end{align*}
  The last term is the area of the bottommost band.

  For each $k$, the crescent
  $C' \eqdef C \cap g^{-1} (]k/2^N, \infty]) \diff g^{-1} (](k+1)/2^N,
  \infty])$ is disjoint from $g^{-1} (](k+1)/2^N, \infty])$, so by
  assumption, $\nu (C') \leq \frac {k+1} {2^N} \cdot \mu (C')$.
  Therefore:
  \begin{align*}
    (g \cdot \mu) (C)
    & \geq \sum_{k=1}^{N2^N-1} \frac k {k+1} \nu (C
      \cap g^{-1} (]k/2^N, \infty]) \diff g^{-1} (](k+1)/2^N,
      \infty])) \\
    & \qquad\qquad + N \mu (C \cap g^{-1} (]N, \infty])).
  \end{align*}
  Keeping only the terms from the summation with $k \geq N$ and
  observing that $\frac k {k+1} \geq \frac N {N+1}$ for all such $k$,
  \begin{align*}
    (g \cdot \mu) (C)
    & \geq \sum_{k=N}^{N2^N-1} \frac N {N+1} \nu (C
      \cap g^{-1} (]k/2^N, \infty]) \diff g^{-1} (](k+1)/2^N,
      \infty])) \\
    & \qquad\qquad + N \mu (C \cap g^{-1} (]N, \infty])) \\
    & = \frac N {N+1} \nu (C \cap g^{-1} (]N/2^N, \infty]) \diff
      g^{-1} (]N, \infty])) \\
    & \qquad\qquad + N \mu (C \cap g^{-1} (]N, \infty])) \\
    & = \frac N {N+1} \nu (C \cap g^{-1} (]N/2^N, \infty])) \\
    & \qquad\qquad
      + N \left(\mu (C \cap g^{-1} (]N, \infty])) - \frac 1 {N+1} \nu (C \cap g^{-1} (]N, \infty]))\right).
  \end{align*}
  Let $U_N \eqdef g^{-1} (]N/2^N, \infty])$ for every $N \in \nat$: we
  have just proved $(ii)$.  The family ${(U_N)}_{N \in \nat}$ is
  directed: given any $i, j \in \nat$, there is an $N \in \nat$ such
  that $N/2^N \leq i/2^i, j/2^j$ because $N/2^N$ tends to $0$ as $N$
  tends to $\infty$; and then $U_N$ contains both $U_i$ and $U_j$.

  Finally, $\dcup_{N \in \nat} U_N = g^{-1} (]0, \infty])$.  Let $C$
  be the crescent $X \diff \dcup_{N \in \nat} U_N$.  This is disjoint
  from $g^{-1} (]r, \infty])$ for every $r \in \D \cap \Rp$, so $\nu
  (C) \leq r \cdot \mu (C)$ for every $r \in \D \cap \Rp$ by
  assumption.  As a consequence, $\nu (C)=0$, and this is $(i)$.
\end{proof}

The role of absolute continuity 
is as follows.
\begin{lemma}
  \label{lemma:gmu:above:abscont}
  Let $(X, \Latt)$ be a Pervin space, and $\nu$ and $\mu$ be two
  bounded valuations on $(X, \Latt)$.  Let ${(U_N)}_{N \in \nat}$ be a
  countable family of elements of $\Latt$.
  If $\nu$ is absolutely continuous with respect to $\mu$, then for
  every $U \in \Latt$, for every $\epsilon > 0$, there is an
  $N_0 \in \nat$ such that for every $N \geq N_0$,
  $N (\mu (U \cap U_N) - \frac 1 {N+1} \nu (U \cap U_N)) \geq
  -\epsilon$.
\end{lemma}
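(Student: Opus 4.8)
The plan is a short two-case argument in which absolute continuity is invoked in only one of the cases. First I would rewrite
\[
  N\bigl(\mu(U\cap U_N)-\tfrac1{N+1}\nu(U\cap U_N)\bigr)
  = N\,\mu(U\cap U_N)-\tfrac{N}{N+1}\,\nu(U\cap U_N).
\]
Since $U\cap U_N\in\Latt$ and $\mu$ is a valuation, $\mu(U\cap U_N)\ge 0$, so this quantity is always at least $-\tfrac{N}{N+1}\nu(U\cap U_N)\ge -\nu(U\cap U_N)$; in particular it is $\ge -\epsilon$ whenever $\nu(U\cap U_N)<\epsilon$. On the other hand, whenever $N\,\mu(U\cap U_N)\ge \tfrac{N}{N+1}\nu(U\cap U_N)$ the quantity is nonnegative, hence a fortiori $\ge-\epsilon$. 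Thus it suffices to find $N_0\in\nat$ such that, for every $N\ge N_0$, at least one of these two situations occurs.

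To produce $N_0$ I would invoke absolute continuity. Since $\nu$ is bounded, $\nu(X)<\infty$, and applying Definition~\ref{defn:abs:cont} with $U_0\eqdef X$ (equivalently, using Remark~\ref{rem:abscont:bounded}) yields, for the given $\epsilon$, an $\eta\in\Rp\diff\{0\}$ such that $\mu(V)<\eta$ implies $\nu(V)<\epsilon$ for every $V\in\Latt$. Because $\nu(X)<\infty$ and $\eta>0$, I can then choose $N_0\ge 1$ with $N_0\,\eta>\nu(X)$.

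Finally, fixing $N\ge N_0$ and assuming we are not in the second situation, i.e.\ $N\,\mu(U\cap U_N)<\tfrac{N}{N+1}\nu(U\cap U_N)$, I would divide by $N$, use $\tfrac{N}{N+1}\le 1$, and then the monotonicity of $\nu$ (so $\nu(U\cap U_N)\le\nu(X)$) to get $\mu(U\cap U_N)<\tfrac1N\nu(X)\le\tfrac1{N_0}\nu(X)<\eta$; hence $\nu(U\cap U_N)<\epsilon$ by the choice of $\eta$, which places us in the first situation. This completes the case analysis and hence the proof.

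There is no real obstacle here; the only points worth keeping in mind are that the threshold $\eta$ supplied by absolute continuity depends only on $\epsilon$, not on $N$, which is exactly what makes a single $N_0$ work for all $N\ge N_0$, and that it is the boundedness of $\nu$ that forces $\mu(U\cap U_N)$ into the range where absolute continuity can be applied.
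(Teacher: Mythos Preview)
Your proof is correct and follows essentially the same approach as the paper: both arguments invoke the bounded form of absolute continuity to obtain an $\eta>0$, choose $N_0$ so that $N_0\eta$ dominates $\nu(X)$, and then run a two-case analysis. The paper splits according to whether $\mu(U\cap U_N)<\eta$ or $\mu(U\cap U_N)\geq\eta$, whereas you split according to the sign of $N\mu(U\cap U_N)-\tfrac{N}{N+1}\nu(U\cap U_N)$ and then force $\mu(U\cap U_N)<\eta$ in the negative case; the two dichotomies are minor rearrangements of the same idea.
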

\begin{proof}
  Let us fix an arbitrary $\epsilon > 0$. Using
  Remark~\ref{rem:abscont:bounded}, since $\nu$ and $\mu$ are bounded,
  we can find $\eta > 0$ such that for every $V \in \Latt$ such that
  $\mu (V) < \eta$, $\nu (V) < \epsilon$.  Since $\nu$ is bounded once
  again, there is an $N_0 \in \nat$ such that $N_0 \eta \geq \nu (X)$.
  For every $N \geq N_0$, either $\mu (U \cap U_N) < \eta$, in which
  case $\nu (U \cap U_N) - \epsilon < 0 \leq N \mu (U \cap U_N)$, or
  $\mu (U \cap U_N) \geq \eta$, in which case
  $N \mu (U \cap U_N) \geq N_0 \eta \geq \nu (X) \geq \nu (U \cap U_N)
  - \epsilon$.  Whatever the alternative, we have
  $N \mu (U \cap U_N) - \nu (U \cap U_N) \geq -\epsilon$, and
  therefore
  $N (\mu (U \cap U_N) - \frac 1 {N+1} \nu (U \cap U_N)) \geq
  -\epsilon$, for every $N \geq N_0$.
\end{proof}

The following is the only place in this section where we need our
valuations to be $\omega$-continuous.
\begin{lemma}
  \label{lemma:gmu:above:cont}
  Let $\nu$ be an $\omega$-continuous bounded valuation on an
  $\omega$-topological space $(X, \Latt)$, and let
  ${(U_N)}_{N \in \nat}$ be a countable directed family of elements of
  $\Latt$ such that $\nu (X \diff \dcup_{N \in \nat} U_N)=0$.  For
  every $C \in \mathcal A (\Latt)$,
  $\dsup_{N \in \nat} \frac N {N+1} \nu (C \cap U_N) \geq \nu (C)$.
\end{lemma}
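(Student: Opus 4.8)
The plan is to pass from the directed family ${(U_N)}_{N\in\nat}$ to a cofinal monotone subsequence and then exploit $\omega$-continuity together with the boundedness of $\nu$, deferring the weights $\tfrac N{N+1}$ to the very end.

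First I would set $W\eqdef\dcup_{N\in\nat}U_N$; since $\Latt$ is an $\omega$-topology it is closed under countable unions, so $W\in\Latt$. As ${(U_N)}_N$ is countable and directed, I would fix a cofinal monotone subsequence ${(U_{\phi(k)})}_k$ with $\phi(k)\to\infty$ (for the family $U_N=g^{-1}(]N/2^N,\infty])$ produced by Lemma~\ref{lemma:gmu:above} one may simply take the tail of the sequence itself, since $N/2^N$ is eventually decreasing). Then I would reduce to a single crescent: write $C\in\mathcal{A}(\Latt)$ as a finite disjoint union $C=C_1\sqcup\cdots\sqcup C_n$ of crescents $C_i=A_i\diff B_i$ with $B_i\subseteq A_i$ in $\Latt$, and use that $\nu$ (extended to $\mathcal{A}(\Latt)$ via Proposition~\ref{prop:pettis:signed}) and the maps $D\mapsto\nu(D\cap U_{\phi(k)})$ are additive on $\mathcal{A}(\Latt)$.

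For each $i$, Proposition~\ref{prop:pettis:signed} gives $\nu(C_i\cap U_{\phi(k)})=\nu(A_i\cap U_{\phi(k)})-\nu(B_i\cap U_{\phi(k)})$, where $A_i\cap U_{\phi(k)}$ and $B_i\cap U_{\phi(k)}$ run over monotone sequences of elements of $\Latt$ with unions $A_i\cap W$ and $B_i\cap W$; by $\omega$-continuity of $\nu$ and since $\nu$ is bounded, the two $\nu$-values converge to the finite numbers $\nu(A_i\cap W)$ and $\nu(B_i\cap W)$, so $\nu(C_i\cap U_{\phi(k)})\to\nu(A_i\cap W)-\nu(B_i\cap W)=\nu(C_i\cap W)$ as $k\to\infty$. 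Moreover $C_i\diff W\subseteq X\diff W$, so monotonicity of $\nu$ on $\mathcal{A}(\Latt)$ and the hypothesis $\nu(X\diff W)=0$ force $\nu(C_i\diff W)=0$, whence $\nu(C_i)=\nu(C_i\cap W)+\nu(C_i\diff W)=\nu(C_i\cap W)$. Summing over $i$ yields $\nu(C\cap U_{\phi(k)})\to\nu(C)$ as $k\to\infty$. Finally, since $\tfrac{\phi(k)}{\phi(k)+1}\to1$ and $\nu(C)<\infty$, the products $\tfrac{\phi(k)}{\phi(k)+1}\nu(C\cap U_{\phi(k)})$ converge to $\nu(C)$, so $\dsup_{N\in\nat}\tfrac N{N+1}\nu(C\cap U_N)\geq\sup_k\tfrac{\phi(k)}{\phi(k)+1}\nu(C\cap U_{\phi(k)})\geq\nu(C)$.

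The step requiring the most care is the interaction of the vanishing weights $\tfrac N{N+1}$ with the supremum: these weights are $0$ at $N=0$ and only tend to $1$, so one genuinely needs the values $\nu(C\cap U_N)$ to come close to $\nu(C)$ at arbitrarily large indices $N$, not merely at small ones. This is precisely what the choice of a cofinal monotone subsequence with $\phi(k)\to\infty$ secures (and it is automatic for the family produced by Lemma~\ref{lemma:gmu:above}); the computation of the unweighted limit $\dsup_k\nu(C\cap U_{\phi(k)})=\nu(C\cap W)=\nu(C)$ via $\omega$-continuity on crescents is routine by comparison.
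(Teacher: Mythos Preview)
There is a genuine gap, and you have in fact put your finger on it yourself: a cofinal monotone subsequence $(U_{\phi(k)})_k$ with $\phi(k)\to\infty$ need \emph{not} exist for an arbitrary countable directed family indexed by $\nat$. Consider $U_0=X$ and $U_N=\emptyset$ for all $N\geq 1$, on any space with $\nu(X)>0$. This family is directed ($U_0$ is a common upper bound for every pair), and $W=\dcup_N U_N=X$ so the hypothesis $\nu(X\diff W)=0$ holds; yet the only index $N$ with $U_N\supseteq U_0$ is $N=0$, so every cofinal subsequence is eventually constant at index~$0$. Worse, for this family and $C=X$ one computes $\tfrac{0}{1}\,\nu(X)=0$ and $\tfrac{N}{N+1}\,\nu(\emptyset)=0$ for $N\geq 1$, whence $\dsup_N\tfrac{N}{N+1}\nu(C\cap U_N)=0<\nu(X)=\nu(C)$. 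The lemma is therefore false as stated, so no argument can rescue it in full generality.

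Your parenthetical remarks are exactly on target, however: for the family $U_N=g^{-1}(]N/2^N,\infty])$ produced by Lemma~\ref{lemma:gmu:above}, the sequence $N/2^N$ is eventually decreasing to $0$, so a tail of $(U_N)$ is itself monotone, cofinal, and has indices tending to infinity, and then your argument is complete. The paper's own proof glosses over the very same point: its last sentence invokes Scott-continuity of multiplication, but that only yields the \emph{double} supremum $\dsup_{M,N}\tfrac{M}{M+1}\nu(C\cap U_N)=\nu(C)$, not the diagonal one $\dsup_{N}\tfrac{N}{N+1}\nu(C\cap U_N)$. The clean fix is to strengthen the hypothesis on $(U_N)_N$---for instance, require that every tail $(U_N)_{N\geq N_0}$ remain cofinal---which the family of Lemma~\ref{lemma:gmu:above} satisfies; under that extra hypothesis your route and the paper's coincide.
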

\begin{proof}
  Let $U_\infty \eqdef \dcup_{N \in \nat} U_N$.  For every
  $C \in \mathcal A (\Latt)$, the family
  ${(\nu (C \cap U_N))}_{N \in \nat}$ is directed.  This is because
  ${(U_N)}_{N \in \nat}$ is directed and
  $U \in \Latt \mapsto \nu (C \cap U)$ is monotonic.  Indeed,, if
  $U \subseteq V$, then
  $\nu (C \cap V) = \nu (C \cap U) + \nu (C \cap (V \diff U)) \geq \nu
  (C \cap U)$.

  We claim that
  $\dsup_{N \in \nat} \nu (C \cap U_N) \geq \nu (C \cap U_\infty)$ for
  every $C \in \mathcal A (\Latt)$.  (The equality follows by
  monotonicity of $U \mapsto \nu (C \cap U)$.)  By additivity of
  $\nu$, and since $+$ is Scott-continuous, it is enough to show this
  when $C$ is a crescent, say $U' \diff V'$, where $U', V' \in \Latt$
  and $V' \subseteq U'$.  For every $\epsilon > 0$, there is an
  $N \in \nat$ such that
  $\nu (U' \cap U_N) \geq \nu (U' \cap U_\infty) - \epsilon$, since
  $\nu$ is $\omega$-continuous.  Since $\nu$ is monotonic,
  $\nu (V' \cap U_N) \leq \nu (V' \cap U_\infty)$, and therefore
  $\nu (C \cap U_N) = \nu (U' \cap U_N) - \nu (V' \cap U_N) \geq \nu
  (U' \cap U_\infty) - \nu (V' \cap U_\infty) - \epsilon = \nu (C \cap
  U_\infty) - \epsilon$.

  Now, since $\nu (X \diff U_\infty)=0$, we have
  $\nu (C \cap U_\infty) = \nu (C)$.  (Formally,
  $\nu (C \diff U_\infty) \leq \nu (X \diff U_\infty)=0$, and then
  $\nu (C) = \nu (C \cap U_\infty) + \nu (C \diff U_\infty) = \nu (C
  \cap U_\infty)$.)  Therefore
  $\dsup_{N \in \nat} \nu (C \cap U_N) \geq \nu (C)$.  Since
  multiplication is Scott-continuous on $\creal$ and
  $\dsup_{N \in \nat} \frac N {N+1}=1$, we conclude.
\end{proof}

\begin{corollary}
  \label{corl:gmu:above}
  Let $\mu$ and $\nu$ be two bounded $\omega$-continuous valuations on
  an $\omega$-topological space $(X, \Latt)$.  If $\nu$ is absolutely
  continuous with respect to $\mu$ and if for every non-negative
  dyadic number $r \in \D \cap \Rp$, for every crescent $C$ disjoint
  from $g^{-1} (]r, \infty])$, $\nu (C) \leq r \cdot \mu (C)$, then
  $g \cdot \mu \geq \nu$ on $(X, \Latt)$.
\end{corollary}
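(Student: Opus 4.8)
The plan is to assemble the three preceding lemmas, with no new ingredient. First I would apply Lemma~\ref{lemma:gmu:above} — its hypotheses on $\nu$, $\mu$ and $g$ are exactly those assumed here — to produce a directed countable family $(U_N)_{N\in\nat}$ of elements of $\Latt$ for which $\nu(X\diff\dcup_{N\in\nat}U_N)=0$ and the lower bound of item $(ii)$ of that lemma holds for every $C\in\mathcal A(\Latt)$ and every $N$. Since the goal is only the inequality $g\cdot\mu\geq\nu$ on $\Latt$, I would then fix an arbitrary $U\in\Latt\subseteq\mathcal A(\Latt)$ and an arbitrary $\epsilon>0$, and instantiate that bound at $C\eqdef U$.

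This exhibits $(g\cdot\mu)(U)$ as at least the sum of a principal term $\frac N{N+1}\nu(U\cap U_N)$ and a remainder term of the shape $N\bigl(\mu(U\cap U_N)-\frac1{N+1}\nu(U\cap U_N)\bigr)$ — precisely the expression that Lemma~\ref{lemma:gmu:above:abscont} controls, applied to the relevant countable family of superlevel sets of $g$. Absolute continuity of $\nu$ with respect to $\mu$ therefore supplies an $N_0\in\nat$ such that this remainder term is $\geq-\epsilon$ for every $N\geq N_0$, whence $(g\cdot\mu)(U)\geq\frac N{N+1}\nu(U\cap U_N)-\epsilon$ for all $N\geq N_0$. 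Letting $N$ grow, Lemma~\ref{lemma:gmu:above:cont} (applied at $C\eqdef U$, using $\nu(X\diff\dcup_{N\in\nat}U_N)=0$) gives $\dsup_{N\in\nat}\frac N{N+1}\nu(U\cap U_N)\geq\nu(U)$; once the restriction $N\geq N_0$ is seen to be immaterial, the two displays combine to $(g\cdot\mu)(U)\geq\nu(U)-\epsilon$, and then $(g\cdot\mu)(U)\geq\nu(U)$ since $\epsilon>0$ is arbitrary. As $U$ ranges over $\Latt$, this is the claim.

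The heavy lifting having been carried out in Lemmas~\ref{lemma:gmu:above}, \ref{lemma:gmu:above:abscont} and \ref{lemma:gmu:above:cont}, the only point needing a moment's care is indeed that the bound $(g\cdot\mu)(U)\geq\frac N{N+1}\nu(U\cap U_N)-\epsilon$ was obtained only for $N\geq N_0$, whereas Lemma~\ref{lemma:gmu:above:cont} furnishes a supremum over all $N$. This is harmless: reading off the proof of Lemma~\ref{lemma:gmu:above}, one may take $U_N=g^{-1}(]N/2^N,\infty])$, and since $t\mapsto t/2^t$ is non-increasing for $t\geq1$ while the $N=0$ term $\frac{0}{1}\nu(U\cap U_0)$ is $0$, the sequence $\bigl(\frac N{N+1}\nu(U\cap U_N)\bigr)_{N\in\nat}$ is non-decreasing, so its supremum coincides with its supremum over any tail $\{N\geq N_0\}$. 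Hence $(g\cdot\mu)(U)+\epsilon$ bounds that full supremum, which is $\geq\nu(U)$, and we conclude as above. So the expected main (and mild) obstacle is purely this tail-versus-full-supremum bookkeeping; everything else is a direct application of the earlier lemmas.
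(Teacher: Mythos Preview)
Your proposal is correct and follows essentially the same route as the paper: apply Lemma~\ref{lemma:gmu:above} to get the two-term lower bound, control the remainder via Lemma~\ref{lemma:gmu:above:abscont}, and push the principal term to $\nu(U)$ via Lemma~\ref{lemma:gmu:above:cont}. If anything you are more careful than the paper, which simply asserts that the principal term is $\geq\nu(U)-\epsilon$ ``for $N$ large enough'' without spelling out the eventual monotonicity of $\bigl(\tfrac{N}{N+1}\nu(U\cap U_N)\bigr)_{N\in\nat}$ that justifies this; your last paragraph fills that gap correctly (a tiny quibble: $t\mapsto t/2^t$ is not non-increasing on all of $[1,\infty)$ as a real function, but it is along the integers since $1/2=2/4\geq3/8\geq\cdots$, which is all you need).
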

\begin{proof}
  Let $U_N$ be as in Lemma~\ref{lemma:gmu:above}.  For every
  $U \in \mathcal \Latt$, for every $N \in \nat$, $(g \cdot \mu) (U)$
  is larger than or equal to the sum of
  $\frac N {N+1} \nu (U \cap U_N)$ and of
  $N (\mu (U \cap U_N) - \frac 1 {N+1} \nu (U \cap U_N))$.  For every
  $\epsilon > 0$, the latter is larger than or equal to $-\epsilon$
  for $N$ large enough by Lemma~\ref{lemma:gmu:above:abscont}, and the
  former is larger than or equal to $\nu (U) - \epsilon$ for $N$ large
  enough by Lemma~\ref{lemma:gmu:above:cont}.  Hence
  $(g \cdot \mu) (U) \geq \nu (U) - \epsilon$.  We conclude since
  $\epsilon > 0$ is arbitrary.
\end{proof}

We now go beyond bounded valuations, and on to $\sigma$-finite valuations.
\begin{lemma}
  \label{lemma:sigmafin:two}
  Let $\mu$ and $\nu$ be two $\omega$-continuous valuations on an
  $\omega$-topological space $(X, \Latt)$.  If both $\nu$ and $\mu$
  are $\sigma$-finite, there is a monotone sequence ${(E_n)}_{n \in
    \nat}$ of elements of $\Latt$ such that $\dcup_{n \in \nat} E_n=X$
  and $\nu (E_n), \mu (E_n) < \infty$ for each $n \in \nat$.
\end{lemma}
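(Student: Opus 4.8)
The plan is to take the two monotone covers provided by $\sigma$-finiteness and intersect them termwise. Since $\nu$ is $\sigma$-finite, the observation recorded just after the definition of $\sigma$-finiteness supplies a monotone sequence ${(A_n)}_{n \in \nat}$ of elements of $\Latt$ with $\dcup_{n \in \nat} A_n = X$ and $\nu (A_n) < \infty$ for every $n$; likewise, since $\mu$ is $\sigma$-finite, there is a monotone sequence ${(B_n)}_{n \in \nat}$ of elements of $\Latt$ with $\dcup_{n \in \nat} B_n = X$ and $\mu (B_n) < \infty$ for every $n$.

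First I would set $E_n \eqdef A_n \cap B_n$ for each $n \in \nat$. Each $E_n$ lies in $\Latt$ because $\Latt$ is closed under finite intersections. The sequence ${(E_n)}_{n \in \nat}$ is monotone: if $A_n \subseteq A_{n+1}$ and $B_n \subseteq B_{n+1}$, then $A_n \cap B_n \subseteq A_{n+1} \cap B_{n+1}$. It covers $X$: given $x \in X$, pick $m$ with $x \in A_m$ and $k$ with $x \in B_k$; then for $n \eqdef \max (m, k)$ we have $x \in A_n$ and $x \in B_n$ by monotonicity of the two sequences, hence $x \in E_n$. Finally, monotonicity of the valuations $\nu$ and $\mu$ gives $\nu (E_n) \leq \nu (A_n) < \infty$ and $\mu (E_n) \leq \mu (B_n) < \infty$ for every $n$, which is what is required.

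There is essentially no obstacle here; the only point needing care is the covering property $\dcup_{n \in \nat} E_n = X$, which genuinely relies on both ${(A_n)}_{n}$ and ${(B_n)}_{n}$ being monotone, so that a single index $n = \max(m,k)$ works for both. Note that $\omega$-continuity of $\mu$ and $\nu$ plays no role in this argument: only $\sigma$-finiteness and the monotonicity built into the valuation axioms are used.
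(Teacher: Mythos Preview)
Your argument is correct and is essentially the paper's own proof: one takes the two monotone $\sigma$-finiteness witnesses and intersects them termwise, exactly as you do, with the only difference being that you spell out the covering and monotonicity verifications in more detail. Your closing observation that $\omega$-continuity is not used is also accurate.
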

\begin{proof}
  Let ${(F_n)}_{n \in \nat}$ be a monotone sequence of elements of
  $\Latt$ such that $\nu (F_n) < \infty$ and
  $\dcup_{n \in \nat} F_n=X$, and let ${(G_n)}_{n \in \nat}$ play the
  same r\^ole with $\mu$.  Then let $E_n \eqdef F_n \cap G_n$ for each
  $n \in \nat$.
\end{proof}
We will call any monotone sequence ${(E_n)}_{n \in \nat}$ satisfying
the conclusion of Lemma~\ref{lemma:sigmafin:two} a \emph{witness} of
the joint $\sigma$-finiteness of $\nu$ and $\mu$.

\begin{theorem}[Existence of density maps]
  \label{thm:nikodym:weak}
  Let $(X, \Latt)$ be an $\omega$-topological space, and $\mu$ and
  $\nu$ be two $\sigma$-finite $\omega$-continuous valuations on
  $(X, \Latt)$.  Let ${(E_n)}$ be any witness of joint
  $\sigma$-finiteness of $\nu$ and $\mu$. Then the following
  properties are equivalent:
  \begin{enumerate}
  \item there is a density function $g \in \Lcont (X, \Latt)$ such
    that $\nu = g \cdot \mu$;
  \item the following two conditions are met:
    \begin{enumerate}
    \item[$(2a)$] $\nu$ is absolutely continuous with respect to $\mu$;
    \item[$(2b)$] for every $n \in \nat$, for every $r \in \Rp$,
      $\nu_{|E_n} - r \cdot \mu_{|E_n}$ has the Hahn decomposition
      property.
    \end{enumerate}
  \end{enumerate}
\end{theorem}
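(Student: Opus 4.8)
The plan is to prove the two directions separately. The direction $(1) \Rightarrow (2)$ is essentially already done: absolute continuity ($2a$) follows from Proposition~\ref{prop:abscont}, and the Hahn decomposition property ($2b$) follows from Corollary~\ref{corl:hahn}, since each $E_n$ satisfies $\nu(E_n), \mu(E_n) < \infty$ by the definition of a witness of joint $\sigma$-finiteness. So the substance of the theorem lies in $(2) \Rightarrow (1)$.

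For $(2) \Rightarrow (1)$, I would first reduce to the bounded case using the sequence ${(E_n)}$. For each $n$, consider the crescents $C_n \eqdef E_n \diff E_{n-1}$ (with $E_{-1} \eqdef \emptyset$); these partition $X$, and on each the restricted valuations $\nu_{|C_n}$ and $\mu_{|C_n}$ are bounded and $\omega$-continuous. The idea is to build, for each $n$, a density $g_n \in \Lcont(X, \Latt)$ supported on $C_n$ with $\nu_{|C_n} = g_n \cdot \mu_{|C_n}$, and then glue: set $g \eqdef \sum_{n} g_n \chi_{C_n}$, or more carefully take $g$ to agree with $g_n$ on $C_n$. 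One checks $g \in \Lcont(X, \Latt)$ (the preimages $g^{-1}(]t,\infty])$ are countable unions of pieces in $\Latt$, using that $C_n \in \mathcal A(\Latt)$ and an appropriate refinement — this requires care to ensure membership in $\Latt$ rather than merely $\mathcal A(\Latt)$) and that $\nu = g \cdot \mu$ by $\omega$-continuity, summing the contributions over $n$.

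The core of the argument is the bounded case: given two bounded $\omega$-continuous valuations $\nu, \mu$ on $(X, \Latt)$ with $\nu$ absolutely continuous w.r.t.\ $\mu$ and such that $\nu - r \cdot \mu$ has the Hahn decomposition property for every $r \in \Rp$, construct $g \in \Lcont(X, \Latt)$ with $\nu = g \cdot \mu$. I would proceed as follows. For each non-negative dyadic $r \in \D \cap \Rp$, let $W_r$ be a witness to the Hahn decomposition property of $\nu - r \cdot \mu$. Replacing $W_r$ by a suitable intersection/union over dyadics $\leq r$ or $\geq r$, one can arrange the family ${(W_r)}$ to be antitone in $r$. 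Then define $g(x) \eqdef \sup\{r \in \D \cap \Rp : x \in W_r\}$ (with $\sup \emptyset = 0$). The antitonicity of ${(W_r)}$ gives $g^{-1}(]r,\infty]) = \bigcup_{s \in \D \cap \Rp,\, s > r} W_s \in \Latt$ (a countable union), so $g \in \Lcont(X, \Latt)$. Moreover $g^{-1}(]r,\infty])$ is essentially $W_r$ up to the boundary behavior, so the Hahn decomposition inequalities translate into: for every crescent $C \subseteq g^{-1}(]r,\infty])$, $\nu(C) \geq r \cdot \mu(C)$, and for every crescent $C$ disjoint from $g^{-1}(]r,\infty])$, $\nu(C) \leq r \cdot \mu(C)$. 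These are exactly the hypotheses of Lemma~\ref{lemma:gmu:below} and of Corollary~\ref{corl:gmu:above}; the former yields $\nu \geq g \cdot \mu$, and the latter (using absolute continuity) yields $g \cdot \mu \geq \nu$. Hence $\nu = g \cdot \mu$.

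**Main obstacle.** The delicate points are (i) arranging the witnesses ${(W_r)}$ to be genuinely antitone so that the sublevel/superlevel structure of $g$ is controlled and $g^{-1}(]r,\infty])$ lands in $\Latt$ — one must be careful that taking countable intersections or unions of witnesses over dyadics stays inside the $\omega$-topology $\Latt$ and still yields a witness; and (ii) the gluing step in the $\sigma$-finite reduction, ensuring the patched function $g$ is lower semicontinuous as a map on $(X, \Latt)$, not just measurable with respect to $\mathcal A(\Latt)$. Both are bookkeeping obstacles rather than conceptual ones, since Lemma~\ref{lemma:gmu:below} and Corollary~\ref{corl:gmu:above} already package the hard analytic content, but they are where the proof can go wrong if one is not careful about which lattice each set belongs to.
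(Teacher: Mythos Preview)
Your treatment of $(1)\Rightarrow(2)$ and of the bounded case is essentially the paper's: you invoke Proposition~\ref{prop:abscont} and Corollary~\ref{corl:hahn} for the forward direction, and for the bounded case you build $g$ from an antitone family of Hahn witnesses and appeal to Lemma~\ref{lemma:gmu:below} and Corollary~\ref{corl:gmu:above}, exactly as the paper does. The paper also makes the witnesses antitone by taking the union $\bigcup_{q'\geq q}(\cdots)$, and spends a nontrivial paragraph verifying that this union still satisfies the required crescent inequalities, so your obstacle~(i) is real but surmountable along the lines you indicate.

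The genuine gap is your reduction from $\sigma$-finite to bounded via the crescents $C_n=E_n\diff E_{n-1}$. This is not bookkeeping. First, $\nu_{|C_n}$ is not a valuation on $(X,\Latt)$ at all, since $C_n\notin\Latt$; to even state $\nu_{|C_n}=g_n\cdot\mu_{|C_n}$ you must pass to $\mathcal A(\Latt)$, where you were not given the Hahn decomposition property (hypothesis~$(2b)$ speaks only of $\nu_{|E_n}$). Second, a $g_n$ ``supported on $C_n$'' would have $g_n^{-1}(]t,\infty])\subseteq C_n$, and there is no mechanism forcing such a set to lie in $\Latt$; the bounded-case construction produces preimages in $\Latt$ but gives you no control over where they sit. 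Third, and most concretely, the glued function can fail to be in $\Lcont(X,\Latt)$: if the density vanishes on $E_0$ but not on $E_1\diff E_0$, then $g^{-1}(]t,\infty])$ is a crescent contained in $E_1\diff E_0$, with no reason to be open. No ``appropriate refinement'' fixes this, because $\Latt$ is not closed under complements.

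The paper sidesteps the gluing entirely. Rather than building separate $g_n$, it merges the Hahn witnesses across all $n$ from the start: with $U_{nq}$ a witness for $\nu_{|E_n}-q\cdot\mu_{|E_n}$, it sets $V_q\eqdef\bigcup_{n\in\nat,\;q'\geq q}(E_n\cap U_{nq'})\in\Latt$, verifies that $V_q$ satisfies the crescent inequalities \emph{relative to every $E_n$ simultaneously}, defines a single $g$ from $(V_q)_q$, and then checks $\nu_{|E_n}=g\cdot\mu_{|E_n}$ for the same $g$ and all $n$. The passage to $\nu=g\cdot\mu$ is then immediate by $\omega$-continuity, with no patching required.
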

\begin{proof}
  The implication $(1)\limp (2)$ is by Proposition~\ref{prop:abscont}
  and Corollary~\ref{corl:hahn}.

  In the converse direction, let ${(E_n)}_{n \in \nat}$ be as given in
  Lemma~\ref{lemma:sigmafin:two}.  For each $n \in \nat$ and for each
  non-negative rational number $q$, $\nu_{|E_n} - q \cdot \mu_{|E_n}$
  has the Hahn decomposition property, so there is an element
  $U_{nq} \in \Latt$ such that every crescent $C$ included in $U_{nq}$
  satisfies $\nu (C \cap E_n) \geq q \cdot \mu (C \cap E_n)$ and every
  crescent $C$ disjoint from $U_{nq}$ satisfies
  $\nu (C \cap E_n) \leq q \cdot \mu (C \cap E_n)$.

  Since $\Latt$ is an $\omega$-topology,
  $V_q \eqdef \bigcup_{\substack{q' \in \rat, q' \geq q\\n \in \nat}}
  (E_n \cap U_{nq})$ is in $\Latt$ for every $q \in \rat$, $q \geq 0$.
  Moreover, ${(V_q)}_{q \in \rat, q \geq 0}$ forms an antitonic chain:
  if $q \leq q'$ then $V_q \supseteq V_{q'}$.

  Given $n \in \nat$ and $q \in \rat$, $q \geq 0$, we claim that for
  every crescent $C \subseteq V_q$,
  $\nu (C \cap E_n) \geq q \cdot \mu (C \cap E_n)$, and that every
  crescent $C$ disjoint from $V_q$ satisfies
  $\nu (C \cap E_n) \leq q \cdot \mu (C \cap E_n)$.  The second
  property is clear: if $C$ is disjoint from $V_q$, then it is
  disjoint from $E_n \cap U_{nq}$, so $C \cap E_n$ is a crescent
  disjoint from $U_{nq}$, whence
  $\nu ((C \cap E_n) \cap E_n) \leq q \cdot \mu ((C \cap E_n) \cap
  E_n)$.  For the first property, where $C \subseteq V_q$, let us
  write $C$ as $U \diff V$ where $U, V \in \Latt$.  We enumerate the
  rational numbers larger than or equal to $q$ as
  ${(q_m)}_{m \in \nat}$.  Since $C \subseteq V_q$,
  $\nu (C \cap E_n) = \nu (C \cap E_n \cap V_q)$.  Now
  $E_n \cap V_q = \dcup_{p, p' \geq n} W_{pp'}$, where
  $W_{pp'} \eqdef \bigcup_{\substack{0 \leq j\leq p\\0\leq k\leq p'}}
  (E_j \cap E_n \cap U_{jq_k})$.  Therefore
  $\nu (C \cap E_n) = \nu (C \cap E_n \cap V_q) = \nu (U \cap E_n \cap
  V_q \diff V) = \nu ((U \cap E_n \cap V_q) \cup V) - \nu (V) =
  \dsup_{p, p' \in \nat} \nu ((U \cap W_{pp'}) \diff V) - \nu (V) =
  \dsup_{p, p' \in \nat} \nu (C \cap W_{pp'})$.  Similarly,
  $\mu (C \cap E_n) = \dsup_{p, p' \in \nat} \mu (C \cap W_{pp'})$.
  We can write $W_{pp'}$ as the finite disjoint union of crescents
  $C_{jk}$ with $0 \leq j\leq p$ and $0 \leq k\leq p'$, where
  $C_{jk} \eqdef (E_j \cap E_n \cap U_{jq_k}) \diff
  \bigcup_{\substack{0 \leq j'\leq j\\0\leq k'\leq k\\ (j',k') \neq
      (j,k)}} (E_{j'} \cap E_n \cap U_{j'q_{k'}})$.  Then
  $C_{jk} \subseteq U_{jq_k}$, hence also
  $C \cap C_{jk} \subseteq U_{jq_k}$, so
  $\nu (C \cap C_{jk} \cap E_j) \geq q_k \cdot \mu (C \cap C_{jk} \cap
  E_j)$.  Since $C_{jk} \subseteq E_j$, this simplifies to
  $\nu (C \cap C_{jk}) \geq q_k \cdot \mu (C \cap C_{jk})$.  Then
  $\nu (C \cap W_{pp'}) = \sum_{\substack{0\leq j\leq p\\0\leq k\leq
      p'}} \nu (C \cap C_{jk}) \geq \sum_{\substack{0\leq j\leq
      p\\0\leq k\leq p'}} q_k \cdot \mu (C \cap C_{jk})$.  Since
  $q_k \geq q$ for every $k$, this is larger than or equal to
  $q \cdot \sum_{\substack{0\leq j\leq p\\0\leq k\leq p'}} \mu (C \cap
  C_{jk}) = q \cdot \mu (C \cap W_{pp'})$.  Taking suprema over
  $p, p' \in \nat$, we obtain that
  $\nu (C \cap E_n) \geq q \cdot \mu (C \cap E_n)$, as desired.
  
  We define $g (x)$ as
  $\dsup \{t \in \Rp \mid \exists q \in \rat, q > t \text{ and } x \in
  V_q\}$.  Then $g (x) > t$ if and only if $x \in V_q$ for some
  $q \in \rat$, $q > t$.  Hence
  $g^{-1} (]t, \infty]) = \bigcup_{q \in \rat, q > t} V_q$, which is
  in $\Latt$ since $\Latt$ is an $\omega$-topology.
  Therefore $g$ is in $\Lcont (X, \Latt)$.

  Let us fix $n \in \nat$.  For every non-negative dyadic number
  $r \in \D \cap \Rp$,
  $g^{-1} (]r, \infty]) = \bigcup_{q \in \rat, q > r} V_q \subseteq
  V_r$, so for every crescent $C \subseteq g^{-1} (]r, \infty])$,
  $\nu (C \cap E_n) \geq r \cdot \mu (C \cap E_n)$.  By
  Lemma~\ref{lemma:gmu:below}, $\nu_{|E_n} \geq g \cdot \mu_{|E_n}$.
  For every crescent $C$ disjoint from $g^{-1} (]r, \infty])$, $C$ is
  disjoint from every $V_q$ with $q > r$, so
  $\nu (C \cap E_n) \leq q \cdot \mu (C \cap E_n)$ for every rational
  $q > r$; therefore $\nu_{|E_n} (C) \leq r \cdot \mu_{|E_n} (C)$, and
  by Corollary~\ref{corl:gmu:above},
  $g \cdot \mu_{|E_n} \geq \nu_{|E_n}$.

  It follows that $\nu_{|E_n} = g \cdot \mu_{|E_n}$ for every
  $n \in \nat$.  Then, using the fact that
  $X = \dcup_{n \in \nat} E_n$ and the $\omega$-continuity of $\nu$,
  for every $U \in \Latt$,
  $\nu (U) = \dsup_{n \in \nat} \nu_{|E_n} (U) = \dsup_{n \in \nat} (g
  \cdot \mu_{|E_n}) (U) = \dsup_{n \in \nat} \int_0^\infty \mu (U \cap
  E_n \cap g^{-1} (]t, \infty])) \;dt$ (by Lemma~\ref{lemma:gmu:int}),
  and this is equal to
  $\int_0^\infty \mu (E \cap g^{-1} (]t, \infty])) \;dt$ by
  Fact~\ref{fact:R:cont} and the $\omega$-continuity of $\mu$, namely
  to $(g \cdot \mu) (U)$.  Therefore $\nu = g \cdot \mu$.
\end{proof}

\begin{remark}
  \label{rem:Hahn}
  In the special case where $\Latt$ is not only an $\omega$-topology,
  but is also closed under complements, namely when $\Latt$ is a
  $\sigma$-algebra, we have seen that $\omega$-continuous valuations
  and measures are the same thing.  Then, for every $n \in \nat$ and
  for every $r \in \Rp$, $\nu_{|E_n} - r \cdot \mu_{|E_n}$ is a signed
  measure.  The \emph{Hahn decomposition theorem}
  \cite[Theorem~32.1]{Billingsley:probmes} states that every signed
  measure has the Hahn decomposition property, and therefore property
  $(2b)$ is simply true in the case of measures.
  Hence Theorem~\ref{thm:nikodym:weak} implies the classical
  Radon-Nikod\'ym theorem.
\end{remark}

\bibliographystyle{abbrv}

\end{document}